\newtheorem{remark}{Remark}[section]
\newtheorem{theorem}{Theorem}[section]
\newtheorem{proposition}{Proposition}[section]
\newtheorem{corollary}{Corollary}[section]
\newtheorem{assertion}{Claim}[section]
\newtheorem{definition}{Definition}[section]
\newenvironment{proof}{\trivlist
\item[\hskip\labelsep{\em Proof}\,:]}{\hfill{$\Box$}\endtrivlist}
\def\r{\mathbb{R}} \def\R{\mathbb{R}}
\def\n{\mathbb{N}} \def\N{\mathbb{N}}
\def\c{\mathbb{C}} \def\C{\mathbb{C}}
\def\h{\mathbb{H}} 
\def\l{\mathbb{L}} \def\L{\mathbb{L}}
\def\z{\mathbb{Z}} 
\def\S{\mathbb{S}}
\def\Nb{\mathcal{N}}\def\nb{\mathcal{N}}
\def\Sb{\mathcal{S}}
\def\Tb{\mathcal{T}}
\def\Db{\mathcal{D}}
\def\div{{\rm{Div}}}
\newcommand{\ovr}{\overline}
\title{The number of conformally equivalent maximal graphs}
\author{Isabel Fernández
\thanks{Research partially supported by
Spanish MEC-FEDER Grant MTM2007-64504, and Regional J. Andalucía Grants P06-FQM-01642 and FQM325.\newline
2000 Mathematics Subject Classification. Primary 53C50; Secondary 53A10, 53A30. \newline
Key words and phrases: maximal surfaces, conelike singularities, conformal structures.}}
\begin{document}
\maketitle

\begin{abstract}
We show that the number of entire maximal graphs with finitely
many singular points that are conformally equivalent
is a universal constant that depends only on the number of singularities,
namely $2^n$ for graphs with $n+1$ singularities.
We also give an explicit description of the family of entire maximal graphs with a finite number
of singularities all of them lying on a plane orthogonal to the limit normal vector at infinity.
\end{abstract}

\section{Introduction}\label{sec:intro}

The present paper is devoted to the study of maximal graphs in the Lorentz-Minkowski space $\L^3=(\R^3,\langle\cdot,\cdot\rangle),$ where  $\langle(x_1,x_2,x_3),(y_1,y_2,y_3)\rangle=x_1y_1+x_2y_2-x_3y_3$. Maximal graphs appear in a natural way when considering variational problems. If $u:\Omega\subset\R^2\equiv\{x_3=0\}\to\R$ is a smooth function defining a spacelike graph in $\L^3$ (that is, a graph with Riemannian induced metric), then its area is given by the expression
$$A(u)=\int_\Omega\sqrt{1-|\nabla u|^2},$$
(recall that $|\nabla u|<1$ since the graph is spacelike).
The corresponding equation for the critical points of the area functional in $\L^3$ is
\begin{equation}\label{eq:maximal}
\mbox{Div}\frac{\nabla u}{\sqrt{1-|\nabla u|^2}} =0.
\end{equation}
Spacelike graphs satisfying this (elliptic) differential equation are called {\em maximal
graphs}, since they represent local maxima for the area functional.
Geometrically, this condition is equivalent to the fact that the
mean curvature of the surface in $\L^3$ vanishes identically.
Besides of their ma\-the\-ma\-ti\-cal interest, these surfaces, and
more generally those having
constant mean curvature, have a significant importance in physics \cite{mardsen}.\\

\begin{figure}[htbp]\centering
\includegraphics[width=.8\textwidth]{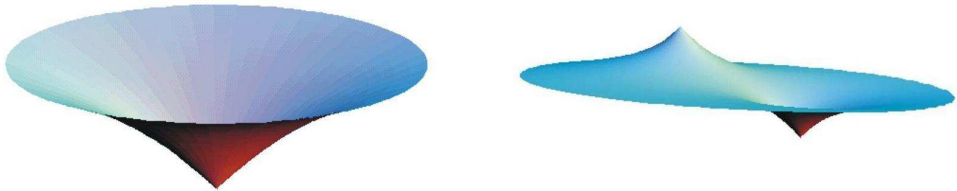}\label{fig:catrie}\caption{Left: Lorentzian catenoid. Right: Riemann type surface.}
\end{figure}

From a global point of view, it is known by Calabi's theorem \cite{calabi} that the only everywhere regular complete maximal
surface is the plane. In particular, there are no entire maximal graphs besides the trivial one.
This motivates to allow the existence of singularities, i.e., points of the surface where the metric degenerates.
We will focus here our attention to the case where the singular set is the {\em smallest} possible, that is,
a finite number of points. The first and most known example is the Lorentzian catenoid (Figure \ref{fig:catrie}, left),
an entire maximal graph with one singular point, and actually the only one as proved in \cite{ecker},
but there are examples with any arbitrary number of singularities.
Among them it is worth mentioning the Riemann type maximal graphs (Figure \ref{fig:catrie}, right)
obtained in \cite{lopez-lopez-souam}, with two singular points and characterized by the property of being
foliated by circles and lines. Other highly symmetric examples with arbitrary number of singularities (even infinitely many)
were constructed in \cite{praga} (Figure \ref{fig:3y4picos}). Actually there is a huge amount of such graphs.
Indeed, in \cite{conelike} the authors study  the moduli space $\mathcal{G}_n$  of entire maximal graphs with $n+1$ singularities,
proving that it is an analytic manifold of dimension $3n+4$. A global system of coordinates in this space is given
by the position of the singular points in $\L^3$ and a real number called {\em the logarithmic growth} that controls
the asymptotic behavior.\\

\begin{figure}[htbp]\centering
\includegraphics[width=\textwidth]{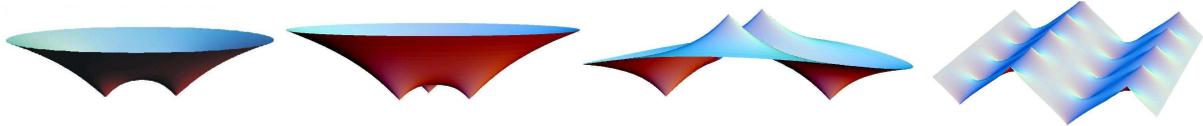}\caption{Entire maximal graphs with isolated singularities.}\label{fig:3y4picos}
\end{figure}

If $u:\Omega\to\R$ defines a maximal graph, singular points appear where $|\nabla u|=1$.
At a singular point, the PDE \eqref{eq:maximal} stops being elliptic. Moreover, the tangent plane of the surface becomes lightlike,
the normal vector has no well defined limit, and the surface is asymptotic to a half of the light cone of the singular point.
For this reason they are called {\em conelike singularities}. It should be pointed out that a maximal surface
with isolated conelike singularities is an entire graph if and only if it is complete (that is, divergent curves
have infinite length), as proved in \cite{conelike}.\\

If $S$ is a maximal surface with singular set $F\subset S$, its
regular part $S\setminus F$ has a natural conformal structure
associated to its Riemannian metric. The conformal type of a
maximal surface has been widely studied, for example in \cite{paralore,alias}
parabolicity criteria for maximal surfaces are given, but there also
exist hyperbolic examples, \cite{alarcon1,alarcon2,muy}.

In the case of entire graphs with $n+1$ singularities,
it turns out that $S\setminus F$ is conformally equivalent to a $n$-connected
circular domain of the complex plane, that is, the plane with
$n+1$ discs removed. Each one of these boundary circles
corresponds to a singular point of the graph. Our aim in this paper is to study the space of
entire maximal graphs with the same conformal structure, that is
\begin{quote}
{\bf Problem.} {\em Given a $n$-connected circular domain $\Omega$ of the complex plane, how many entire maximal graphs with $n+1$ singularities
are there whose conformal structure is biholomorphic to $\Omega$?}
\end{quote}

We will answer this question by proving that the number of (non congruent) maximal graphs supported by a fixed
circular domain is finite and does not depend on the circular domain, but only on the number
of connected component of the boundary, that is, the number of singularities. This will be the aim
of Section \ref{sec:first}. Thus, our problem reduces to compute the number of graphs for a fixed conformal structure.
In Section \ref{sec:count} we will fix an specific $n$-connected circular domain (Definition \ref{def:omega})
and we will find out how many entire graphs are there with this conformal structure, obtaining that there are exactly $2^n$ non-congruent surfaces.
Moreover, the graphs constructed in Section \ref{sec:count} can be characterized by the property of
having all their singularities in a plane orthogonal to the limit normal vector at infinity (Theorem \ref{th:alineados}).

Let us point out that our main result contrast with the analogous
problem in the related theory of solutions to the Monge-Ampère
equation
\begin{equation}\label{eq:hess} \mbox{Hess}(u) =1.\end{equation}
Specifically, in \cite{GMM} it is proved that any solution to \eqref{eq:hess} globally defined on $\R^2$ with
finitely many isolated singularities is uniquely determined by its
associated conformal structure, which is also a circular domain of the
complex plane.

\section{Preliminaries}\label{sec:prelim}

\subsection{Maximal surfaces}\label{sec:maximal}

A differentiable immersion $X:M\to\l^3$ from a surface $M$ to
$\l^3$ is said to be spacelike if the tangent plane at any point
is spacelike, that is to say, the induced metric on $M$ is
Riemannian. The Gauss map of a spacelike surface in $\L^3$ takes values in the sphere of radius $-1$, $\h^2=\{p\in\L^3\;:\;\langle p,p\rangle=-1\}$.
Since $\h^2$ has two connected components, $\h^2_+=\h^2\cap\{x_3>0\}$ and $\h^2_-=\h^2\cap\{x_3<0\}$, spacelike surfaces are always orientable.\\

A maximal immersion is a spacelike immersion whose mean curvature
vanishes. A remarkable property of maximal surfaces in $\l^3$ is the existence of a Weierstrass-type representation for maximal surfaces, similar to the one of minimal surfaces. Roughly speaking, the Weierstrass representation of a conformal maximal immersion $X:M\to\l^3$ is a pair $(g,\phi_3)$ of a meromorphic function and a holomorphic $1$-form defined on $M$ such that, up to translation, the immersion can be recovered as
 \begin{equation}\label{eq:repr}
X(p):=\mbox{Real} \int_{p_0}^p \big(
\frac{i}{2}(\frac{1}{g}-g)\phi_3,
\frac{-1}{2}(\frac{1}{g}+g)\phi_3,
\phi_3
 \big) ,
 \end{equation}
where $p_0\in M$ is an arbitrary point. It is worth mentioning that $g$ agrees with the stereographic projection of the Gauss map of the surface.
We refer to \cite{kobayashi, ecker} and Theorem \ref{th:representation} below for more details.

We will focus our attention to entire maximal graphs, that is, maximal graphs defined on the whole plane $\{x_3=0\}$.
As we explained in Section \ref{sec:intro}, the only everywhere regular example is the plane \cite{calabi},
and so singularities (i.e., points where the induced metric converges to zero) appear in a natural way in this setting.
The following theorem condense the information regarding the global structure of entire maximal graphs with isolated singularities
(also called {\em conelike singularities}).
\begin{proposition}[Global behavior, \cite{conelike}]  \label{pro:global}
Let $S$ be a surface with isolated singularities in $\L^3$. Then the following two statements are equivalents:
\begin{enumerate}[(i)]
\item $S$ is a complete embedded maximal surface,
\item $S$ is an entire graph over any spacelike plane.
\end{enumerate}

In  this case $S$ is asymptotic at infinity to either a half-catenoid or a plane.
If we label $F\subset S$ as the singular set,  $S\setminus F$ is conformally equivalent to $\Omega_0:=\c\setminus\cup_{p\in F} D_p,$
where $D_p$ are pairwise disjoint closed discs.
Moreover, the associated conformal
reparameterization $X:\Omega_0\to\L^3$  extends analytically to
$\Omega:=\c\setminus\cup_{p\in F}\mbox{Int}(D_p)$ by putting
$X(\partial(D_p))=X(p).$ The point $p_\infty=\infty$ is called the {\em
end} of the surface.
\end{proposition}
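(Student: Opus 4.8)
The plan is to combine the Weierstrass representation \eqref{eq:repr} with a study of the conformal type of the regular part and of the asymptotics at the singularities and at infinity. I write the induced metric of a conformal maximal immersion with Weierstrass data $(g,\phi_3)$ as
$$ds^2=\frac{1}{4}\left(\frac{1}{|g|}-|g|\right)^2|\phi_3|^2,$$
so the metric degenerates precisely where $|g|\to 1$; since $g$ is the stereographic projection of the Gauss map, these are exactly the conelike singular points.

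I first address the equivalence (i)$\Leftrightarrow$(ii). For (ii)$\Rightarrow$(i), an entire graph is automatically embedded and maximality on the regular part is assumed; completeness follows because a divergent curve projects to a divergent curve of the spacelike plane, whose Euclidean length bounds the intrinsic length from below away from $F$, the singular points being handled by the conelike model. For the substantial direction (i)$\Rightarrow$(ii), note that $g$ is continuous with $|g|\neq 1$ on the connected regular part $S\setminus F$, so the sign of $|g|-1$ is constant and, after fixing the orientation, $|g|<1$ everywhere; hence the Gauss image lies in the single component $\h^2_+$. Reading off \eqref{eq:repr}, the vertical projection $\pi\colon S\setminus F\to\{x_3=0\}$ is then a local diffeomorphism, and completeness makes it proper, hence a covering map onto its image. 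As the spacelike plane is simply connected, $\pi$ must be a global diffeomorphism onto the whole plane, which is exactly the entire graph property.

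Next I classify the conformal structure. Once $S$ is an entire graph, $S\setminus F$ is a planar (genus zero) surface whose ends are the point at infinity $p_\infty$ together with one end for each singularity of $F$. Koebe's uniformization theorem for planar domains then yields a biholomorphism onto a circular domain $\Omega_0=\c\setminus\cup_{p\in F}D_p$, the unbounded component corresponding to $p_\infty$ and each removed disc to a singular point. To obtain the analytic extension to $\Omega=\c\setminus\cup_{p\in F}\mbox{Int}(D_p)$ collapsing each $\partial D_p$ to $X(p)$, I would exploit the conelike asymptotics: near a singularity the surface is asymptotic to a half light-cone, so in the conformal coordinate near $\partial D_p$ the map $X$ extends continuously with the whole circle sent to $X(p)$, the extension being analytic because the degenerate Weierstrass data become removable after the reparameterization.

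Finally, the asymptotic behavior at $p_\infty$ is read from the expansion of $(g,\phi_3)$ at the end: the residue of $\phi_3$, i.e.\ the logarithmic growth, distinguishes a planar end (vanishing residue) from a half-catenoidal end (nonzero residue), just as in the classical analysis of embedded minimal ends. The step I expect to be the main obstacle is the control across the conelike singularities, namely showing that each singular end is conformally an annular neighborhood of a circle and that the degenerate data permit the analytic extension collapsing that circle to a point; once this is in place, the remainder reduces to elementary topology, Koebe's theorem, and standard end asymptotics.
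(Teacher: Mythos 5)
You should first note that the paper itself offers no proof of Proposition \ref{pro:global}: it is quoted verbatim from \cite{conelike}, so your attempt has to be judged against what that result actually requires, and there it has two genuine gaps. The first is that your completeness argument in (ii)$\Rightarrow$(i) rests on an inequality that goes the wrong way. For a spacelike graph $x_3=u(p)$ the induced metric is $ds^2=|dp|^2-du^2\le |dp|^2$, so the horizontal projection does \emph{not} decrease length: the Euclidean length of the projected curve bounds the intrinsic length from \emph{above}, not from below. The lower bound $ds^2\ge (1-|\nabla u|^2)|dp|^2$ is useless at infinity, since a priori $|\nabla u|\to 1$ is possible along a divergent curve, making its intrinsic length finite. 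Completeness of entire maximal graphs is precisely the nontrivial content of this implication (for regular entire graphs it is a theorem of Cheng and Yau; with singularities, \cite{conelike} obtain it from gradient estimates at infinity going back to Ecker's exterior analysis \cite{ecker}). Ironically, the correct inequality $L_{\rm int}\le L_{\rm euc}$ is exactly what your covering argument in (i)$\Rightarrow$(ii) needs, since lifts of plane curves then have finite intrinsic length and completeness makes them converge; but a lift can converge to a point of $F$, where $\pi$ is not a local diffeomorphism, and handling that requires the local structure of an embedded maximal surface near an isolated singularity (that it is conelike and locally a graph, \cite{klyachin}), which at that stage of your argument is not available --- it is part of what is being proved, not a usable ``conelike model.''

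The second gap is in the conformal classification. Generalized Koebe uniformization gives a circular domain whose boundary components are circles \emph{or points}, and the analytic content is deciding which end is which. The issue you flag as the main obstacle --- that each singular end is a nondegenerate annulus rather than a puncture --- is in fact the easy half: if it were a puncture, $g$ would be bounded near it and extend holomorphically with $|g|=1$ at an interior point while $|g|<1$ nearby, violating the maximum modulus principle. What you assume silently and never justify is that the end at infinity is a \emph{puncture} (i.e. that $\Omega_0$ is $\c$ minus discs, with no extra boundary circle at infinity); since hyperbolic maximal surfaces exist, parabolicity of the end needs proof, and in \cite{conelike} it is derived only after the catenoidal/planar asymptotics are established via \cite{ecker}. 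This makes your final step circular as written: reading off ``half-catenoid versus plane'' from the residue of $\phi_3$ presupposes that $(g,\phi_3)$ extend meromorphically at the puncture, which presupposes exactly the parabolicity and regular-end behavior in question. The actual logical order in \cite{conelike} is the reverse of yours: asymptotics at infinity first, then the conformal structure and the analytic extension $X(\partial D_p)=X(p)$ by Schwarz reflection across the circles where $|g|=1$.
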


\subsection{Double surface and representation theorem}\label{sec:repr}

As showed in the previous section, the underlying conformal structure of an entire maximal graph with an isolated set
of singularities is conformally equivalent to a circular domain in the complex plane. We now go into this aspect in depth
to obtain a representation theorem for entire maximal graphs with a finite number of singularities that will be crucial in our study.

For any finitely connected circular domain $\Omega=\c\setminus\cup_{j=1}^k
\mbox{Int}(D_j)$, let $\Omega^*$ be its mirror surface and $\Nb$ the double surface obtained by gluing $\Omega$ and
$\Omega^*$ along their common boundaries as in Figure \ref{fig:mirror} (see \cite{farkas} for an explicit description of this construction).
It is clear that $\Nb$ is a compact Riemann surface
 of genus $k-1$ minus two points. We denote by $\ovr{\Nb}$ the
compactification of $\Nb$ by adding these two points.

Finally, we label $J:\Nb\to\Nb$ as the mirror involution mapping a
point in $\Omega$ into its mirror image and viceversa. Notice that $J$
extends to an antiholomorphic involution on $\ovr{\Nb}$, and its
fixed point set of $J$ coincides with
$\partial\Omega\equiv\partial\Omega^*.$

\begin{figure}[htbp]
\centering
\includegraphics[width=16cm]{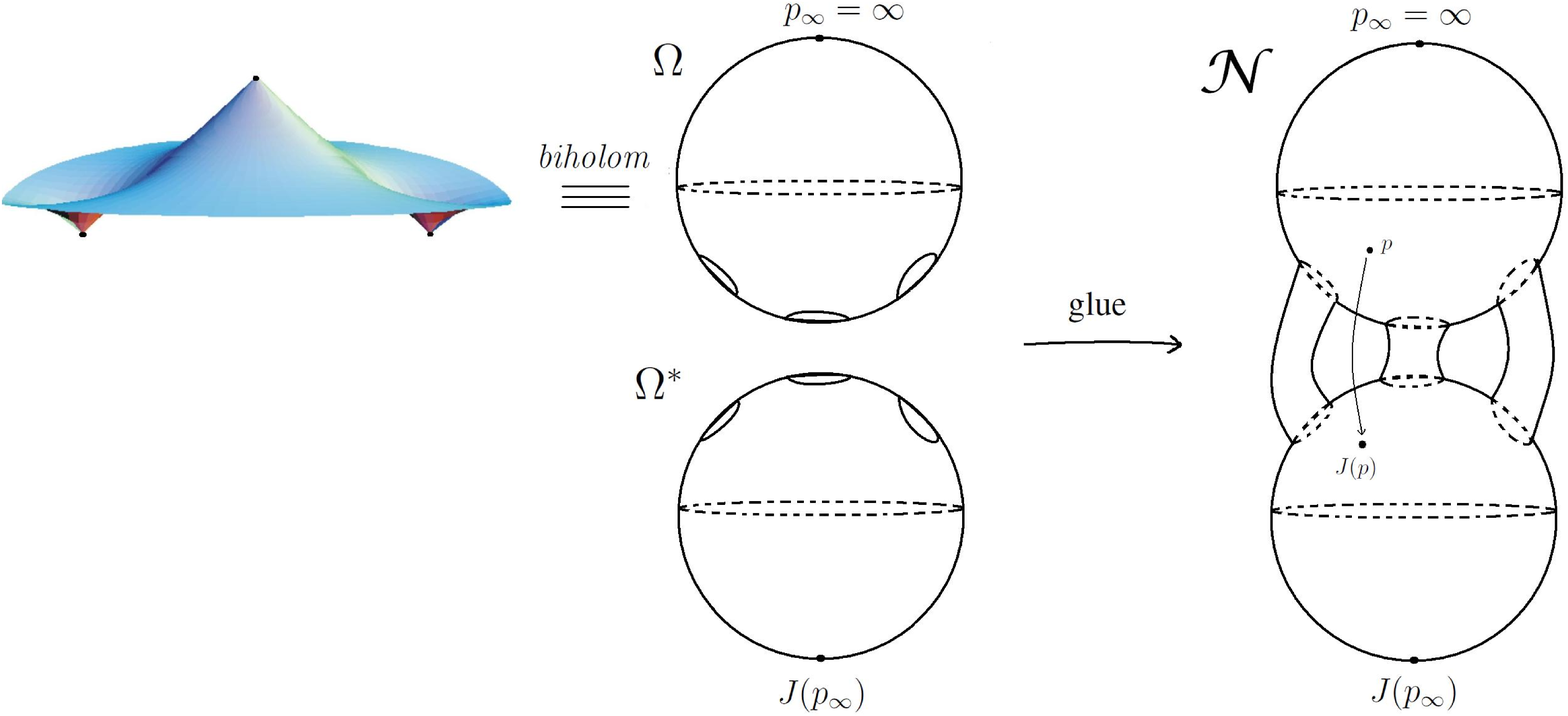}\caption{The double surface associated to a maximal surface with singularities}
\label{fig:mirror}
\end{figure}

This double surface is used in \cite{conelike} to give a characterization of complete
maximal surfaces with a finite number of singularities in terms of their Weierstrass data:

\begin{theorem}[Representation]
\label{th:representation}
Let $X:\Omega\to\l^3$ be a
conformal immersion of an entire maximal graph with $n+1$ conelike singularities, where
$\Omega=\c\setminus\cup_{j=1}^{n+1}\mbox{Int}(D_j),$ $D_j$ pairwise
disjoint closed discs.
Label $\ovr{\Nb}$ as the compactification of the double surface of
$\Omega.$ Then the Weierstrass data of $X,$ $(g,\phi_3)$, satisfy:
\begin{enumerate}[(i)]
\item
$g$ is a meromorphic function on $\overline{\Nb}$ of degree $n+1,$
$|g|<1$ on $\Omega$, and $g \circ J=\frac{1}{\overline{g}}$,
\item
$\phi_3$ is a holomorphic 1-form on
$\overline{\Nb}\setminus\{p_\infty,J(p_\infty)\},$ where $p_\infty=\infty
\in \Omega,$ with poles of order at most two at $p_\infty$ and
$J(p_\infty),$ and satisfying $J^*(\phi_3)=-\overline{\phi_3},$
\item
the zeros of $\phi_3$ in
$\overline{\Nb}\setminus\{p_\infty,J(p_\infty)\}$ coincide (with
the same multiplicity) with the zeros and poles of $g.$
\end{enumerate}

Conversely, let $\ovr{\Nb}$ be a compact genus $n$ Riemann
surface. Suppose that there exists an antiholomorphic involution
$J:\overline{\Nb} \rightarrow \overline{\Nb}$ such that the fixed
point set of $J$ consists of $n+1$ pairwise disjoint analytic
Jordan curves $\gamma_j,$ $j=0,1,\ldots,n,$ and that
$\overline{\Nb}\setminus\bigcup_{j=0}^n \gamma_j={\Omega_0} \cup
J({\Omega_0}),$ where $\overline{\Omega_0}$ is topologically equivalent (and so
conformally) to $\ovr{\c}$ minus a finite number of pairwise
disjoint open discs.

Then, for any $(g,\phi_3)$ satisfying $(i),$ $(ii)$ and $(iii)$
the map $ X:\overline{\Omega_0}\setminus\{p_\infty\} \rightarrow \l^3$
given by Equation \eqref{eq:repr} is well
defined and $S=X(\overline{\Omega_0}\setminus\{p_\infty\})$ is an entire maximal graph with conelike singularities corresponding
to the points $q_j:=X(\gamma_j),$ $j=0,$ $1,\ldots,n.$
\end{theorem}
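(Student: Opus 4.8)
The plan is to prove Theorem \ref{th:representation} in two directions, establishing the necessity of conditions $(i)$--$(iii)$ from the geometry of the immersion, and then the sufficiency by checking that the Weierstrass integrals close up and produce a genuine entire graph with conelike singularities.

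\medskip

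For the forward direction, I would start from the conformal maximal immersion $X:\Omega\to\L^3$ and its Weierstrass pair $(g,\phi_3)$, where $g$ is the stereographic projection of the Gauss map into $\H^2$. Since the Gauss map of a spacelike surface takes values in one connected component of $\H^2$, say $\H^2_-$, stereographic projection places $g$ inside the unit disc, giving $|g|<1$ on $\Omega$; this is the half of condition $(i)$ that lives on the original domain. The essential idea is then to extend $g$ and $\phi_3$ across the boundary circles $\partial\Omega$ to the double surface $\ovr{\Nb}$ using a Schwarz-type reflection principle. Because $X(\partial D_j)$ collapses to a single singular point $q_j$ and the surface is asymptotic there to the light cone, along each boundary circle one has $|g|=1$; this reality condition is exactly what allows the reflection. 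I would define the extension by $g\circ J=1/\ovr g$ and $J^*\phi_3=-\ovr{\phi_3}$, verify these are compatible with the antiholomorphic involution $J$, and check that the extended $g$ is meromorphic on all of $\ovr{\Nb}$. The degree count (that $g$ has degree $n+1$) should follow from the asymptotic behavior at the end $p_\infty$ together with the argument principle applied on the genus-$n$ double, and condition $(iii)$ (matching of zeros of $\phi_3$ with zeros and poles of $g$) is forced by the requirement that the induced metric $ds^2=\tfrac14\big(\tfrac{1}{|g|}-|g|\big)^2|\phi_3|^2$ be a genuine (non-degenerate, finite) Riemannian metric away from the singular and end points. The pole structure of $\phi_3$ at $p_\infty$ and $J(p_\infty)$ of order at most two comes from analyzing the asymptotic end, which by Proposition \ref{pro:global} is either catenoidal or planar.

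\medskip

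For the converse, I would begin with abstract data: a genus-$n$ surface $\ovr{\Nb}$ with the prescribed antiholomorphic involution $J$ and any pair $(g,\phi_3)$ satisfying $(i)$--$(iii)$. The plan is to show the map $X$ defined by Equation \eqref{eq:repr} is well defined on $\ovr{\Omega_0}\setminus\{p_\infty\}$, meaning the three Weierstrass one-forms have vanishing real periods over every closed loop in $\Omega_0$. Here the symmetry conditions do the work: the relations $g\circ J=1/\ovr g$ and $J^*\phi_3=-\ovr{\phi_3}$ guarantee that the integrands have the correct behavior under $J$, and since the generators of $H_1(\Omega_0)$ can be represented by the boundary curves $\gamma_j$ (which are fixed by $J$), the period over each such loop is forced to be real-part-free. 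Condition $(iii)$ ensures the metric is well defined and positive away from the poles and zeros, so that $X$ is a spacelike conformal immersion with vanishing mean curvature, i.e.\ maximal. Finally I would check that across each $\gamma_j$ the map $X$ extends continuously with $X(\gamma_j)$ a single point $q_j$ (using $|g|=1$ on $\gamma_j$, which collapses the metric), producing exactly the conelike singularities, and that completeness plus the end behavior at $p_\infty$ upgrade this to an \emph{entire} graph via Proposition \ref{pro:global}.

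\medskip

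The main obstacle I anticipate is the period/monodromy analysis in both directions: verifying that the reflected data glue to single-valued objects on $\ovr{\Nb}$ in the forward direction, and that the real periods of the Weierstrass forms vanish in the converse. This is delicate because on a genus-$n$ surface there are $2n$ homology generators, and one must exploit the involution $J$ to reduce the period conditions to the geometrically transparent boundary loops; controlling the interaction between the symmetry $J^*\phi_3=-\ovr{\phi_3}$ and the $b$-periods (not just the $a$-periods carried by the $\gamma_j$) is the technically demanding point. A secondary difficulty is the careful local analysis at the end $p_\infty$, where one must match the allowed second-order pole of $\phi_3$ to the catenoidal-versus-planar dichotomy and confirm that the resulting graph is entire rather than merely complete.
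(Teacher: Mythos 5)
The paper does not prove Theorem \ref{th:representation} at all: it is imported verbatim from \cite{conelike} (the sentence preceding it reads ``This double surface is used in \cite{conelike} to give a characterization\dots''), so there is no internal proof to compare your sketch against. That said, your outline follows essentially the same strategy as the original proof in that reference: Schwarz reflection of $(g,\phi_3)$ across the boundary circles (legitimized by $|g|=1$ there, since the tangent plane becomes lightlike at a conelike singularity, and by $x_3$ being constant on each $\partial D_j$, which makes $\phi_3$ purely imaginary along the boundary); nondegeneracy of the metric $ds=\frac{1}{2}\bigl(|g|^{-1}-|g|\bigr)|\phi_3|$ on $\Omega$ plus the $J$-symmetry forcing condition $(iii)$; the pole order of $\phi_3$ coming from the catenoidal/planar dichotomy at the end; and the symmetry killing the real periods in the converse.

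Two points in your sketch deserve correction. First, the difficulty you flag about the $b$-periods is a non-issue: $X$ only needs to be well defined on $\overline{\Omega_0}$, a planar domain whose first homology is generated by the curves $\gamma_j$. Since all three Weierstrass forms satisfy $J^*\phi_k=-\overline{\phi_k}$ (for $\phi_1,\phi_2$ this follows by combining $g\circ J=1/\overline{g}$ with $J^*\phi_3=-\overline{\phi_3}$) and $J$ fixes each $\gamma_j$ pointwise, every $\phi_k$ is purely imaginary along $\gamma_j$, so the real periods vanish on a full set of generators; no $b$-cycle of $\overline{\Nb}$ ever enters, and this same computation shows directly that $X$ is constant on each $\gamma_j$. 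Second, your appeal to Proposition \ref{pro:global} to conclude that the converse produces an \emph{entire graph} is incomplete as stated: that proposition equates entire graphs with complete \emph{embedded} maximal surfaces, and your sketch verifies completeness but not embeddedness. The missing step is the classical argument that a complete spacelike surface (with the singular curves collapsed) projects to $\{x_3=0\}$ by a proper local diffeomorphism, hence a covering map, hence globally a graph. Similarly, your degree count $\deg g=n+1$ in the forward direction is cleaner than an argument-principle computation if you observe that $|g|=1$ exactly on $\bigcup_j\gamma_j$ (as $|g|<1$ on $\Omega$ and $|g|>1$ on $J(\Omega)$) and that $g$ winds once around the unit circle on each $\gamma_j$ by the structure of a conelike singularity; this injectivity along the boundary circles is tacit in your sketch and should be stated.
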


\subsection{Divisors on a Riemann surface.}\label{sub:riemann}

An important part of our work in this paper deals with classical properties of divisors on compact Riemann surfaces. We recall here the notation and basics results that will be used in the sequel (see \cite{farkas} for more details).

Let $\Sigma$ be a Riemann surface. A (multiplicative) divisor on
$\Sigma$ is a formal symbol $\Db=p_1^{k_1}\cdot\ldots p_h^{k_h},$ where
$p_{k_j}\in\Sigma$ and $k_j\in\z.$ We can also write the divisor
$\Db$ as
$$\Db=\prod_{p\in\Sigma} p^{k_p},$$
where $k_p\neq 0$ only for finitely many.
We call $\div(\Sigma)$ to the multiplicative group of divisors on $\Sigma$. We can define an order in $\div(\Sigma)$, indeed, given
$\Db_1=\prod_{p\in\Sigma}p^{k_p^1}$ and $\Db_2=\prod_{p\in\Sigma}
p^{k_p^2} \in \div(\Sigma)$, we say that $\Db_1\geq \Db_2$ if $k_p^1\geq k_p^2$ for all $p\in\Sigma.$

The degree of the divisor $\Db$ is defined as the integer
$\deg(\Db)=\sum_{p\in\Sigma}k_{p}.$
$\Db\in \div(\Sigma)$ is an {\em integral} divisor if $k_p\geq 0$
for any $p\in\Sigma.$ We denote by $\div_k(\Sigma)$ the set of
integral divisors of degree $k.$\\

Let $f$ be a meromorphic function on $\Sigma.$ The associated
divisor of $f$ is defined as $(f)=\prod_{p\in\Sigma} p^{k_p},$
where for any zero (resp. pole) $p$ of $f$ of order $\alpha$ we
have $k_p=\alpha >0$ (resp. $k_p=-\alpha <0$), and $k_p=0$ in
other case. Likewise we define the associated divisor of a
meromorphic 1-form. Classical theory of Riemann surfaces give that both functions and $1$-forms are determined by their divisors up to a multiplying constant. Moreover, the degree of a meromorphic function  on a compact Riemann surface is $0$, whereas the associated divisor of a $1$-form has degree $2n-2$, where $n$ is the genus of the surface.


\section{A first approach to the problem}\label{sec:first}

Let $G$ be an entire maximal graph with $n+1$ conelike singularities.
When $n=0$, Ecker \cite{ecker} characterized the Lorentzian catenoid (Figure \ref{fig:catrie}, left) as the unique entire maximal graph with
$1$ singular point, so we will assume from now on that $n\geq 1$.\\

As showed in Section \ref{sec:maximal}, the underlying conformal structure of a maximal graph is conformally equivalent to a circular domain $\Omega\subset\C$ with $n+1$ boundary components. Moreover, if we rotate the surface so that the end is horizontal, as a consequence of Theorem \ref{th:representation} the divisors of the Weierstrass data $(g,\phi_3)$ of $G$   must be of the form
\begin{equation}\label{eq:div}
(g)=\frac{D\cdot p_\infty}{D^\ast\cdot p_\infty^\ast},\qquad (\phi_3)=\frac{D\cdot D^\ast}{p_\infty\cdot p_\infty^\ast},
\end{equation}
where $p_\infty=\infty\in\overline\Omega$ is the end of the surface, $D\in\div_n(\overline\Omega)$, and $\,^\ast$ denotes the mirror involution.
Notice that the divisor $D$ determines uniquely the Weierstrass data $(g,\phi_3)$ up to replacing by $(e^{i\theta}\,g,A\,\phi_3)$, for any $\theta,A\in\R$.

Conversely, for any integral divisor $D$ of degree $n$ on $\overline\Omega$ such that there exist a meromorphic function $g$ and $1$-form $\phi_3$ satisfying \eqref{eq:div}, it is immediate to check that $(g,\phi_3)$ fulfill conditions $(i)$ to $(iii)$ in Theorem \ref{th:representation}. Thus  by means of Equation \eqref{eq:repr} we can obtain an entire maximal graph with $n+1$ conelike singularities, horizontal end, and conformal structure $\Omega$. Moreover, this graph is unique up to homotheties and vertical rotations.

The problem of finding out whether exists a pair $(g,\phi_3)$ satisfying \eqref{eq:div} for a given divisor $D$ is closely related with the Abel-Jacobi map of the corresponding compact Riemann surface $\overline{\Nb}$, $ \varphi: \div(\overline{\Nb}) \to \mathcal{J}(\overline{\Nb}) $, where $\mathcal{J}(\overline{\Nb})$ denotes the Jacobian bundle of  $\overline{\Nb}$ (see \cite{farkas} for its definition).
Abel Theorem states that $\Db\in\div(\overline{\Nb})$ is the divisor associated to a meromorphic function (resp. 1-form) on $\overline{\Nb}$ if and only if $\varphi(\Db)=0$ (resp. $\varphi(\Db)=T$, where $T\in \mathcal{J}(\overline{\Nb})$ is a fixed element in the Jacobian bundle).
Thus, in our case the divisors $D$ coming from Weierstrass data are precisely those satisfying:
$$\varphi(D)+\varphi(p_\infty) - \varphi(D^\ast) - \varphi(p_\infty^\ast)=0, \quad  \varphi(D) + \varphi(D^\ast) - \varphi(p_\infty) - \varphi(p_\infty^\ast) = T.$$

This set of divisors is deeply studied in \cite{conelike}, proving that the previous two equations are equivalent to
\begin{equation}\label{eq:spin}
2\varphi(D) - 2\varphi(p_\infty^\ast)=T.
\end{equation}
Before going into the properties of this set, let us fix some notation.
Let $\Omega$ be a $n$-connected circular domain and write  $\partial\Omega=\cup_{j=0}^{n} \gamma_{c_j}(r_j)$, with $\gamma_{c_j}(r_j)=\{z\in\C\;,\;|z-c_j|=r_j\}$.
Up to a Möbius transformations we can assume that $c_0=0,$ $r_0=1$ and $c_1\in\r^+$.
Thus, we can parameterize the space $\Tb_n$ of marked (i.e., with an ordering in the boundary components) $n$-connected circular domains (up to biholomorphisms) by their corresponding uplas $v=(c_1,r_1,\ldots,c_n,r_1,\ldots,r_n)\in\R^+\times\C^{n-1}\times(\R^+)^n,$ of centers and radii, with the convention $c_0=0$ and $r_0=1$.
By this identification, $\Tb_n$ can be considered as an open subset of $\r^+\times\C^{n-1}\times(\R^+)^n$, and therefore it inherits a natural analytic structure of manifold of dimension $3n-1$.
We label as $\Omega(v)$ the circular domain defined by $v\in\Tb_n$.
Now define the {\em spinorial bundle}
$$\Sb_n=\{ (v,D)\;:\; v\in\Tb_n,\; 2 \varphi_v(D)- 2 \varphi_v(p_\infty^\ast)   = T_v \},$$
where the subscript $v$ refers to the double surface of $\Omega(v)$, then

\begin{theorem}[\cite{conelike}] The spinorial bundle $\Sb_n$ defined above
is an analytical manifold of dimension $3n-1$ .
Moreover, the map
$$\nu:\Sb_n\to\Tb_n$$
$$\nu(v,D)=v$$
is a finitely sheeted covering.
\end{theorem}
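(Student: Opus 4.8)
The plan is to exhibit $\nu$ as a proper local real-analytic diffeomorphism onto the connected base $\Tb_n$; since a proper local homeomorphism onto a connected, locally compact and locally connected space is automatically a finite-sheeted covering, this will simultaneously force $\Sb_n$ to be a real-analytic manifold of the same dimension $3n-1$ as $\Tb_n$. To set this up, one first checks that the double surfaces $\ovr{\Nb}_v$ vary real-analytically with $v\in\Tb_n$ (the gluing construction of Section~\ref{sec:repr} depends analytically on the centres and radii), so that the period matrices, and hence the Jacobians $\mathcal{J}_v$, the base point images $\varphi_v(p_\infty^\ast)$ and the distinguished class $T_v$, assemble into a real-analytic torus bundle $\mathcal{J}_n\to\Tb_n$ carrying a real-analytic relative Abel--Jacobi map $(v,D)\mapsto(v,\varphi_v(D))$ on the relative symmetric product. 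With this notation $\Sb_n$ is exactly the zero locus of the real-analytic map $F(v,D)=2\varphi_v(D)-2\varphi_v(p_\infty^\ast)-T_v$ valued in the fibres of $\mathcal{J}_n$.

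First I would establish the local structure by the implicit function theorem. Fixing $v$ and differentiating $F$ in the fibre direction at a divisor $D=p_1+\cdots+p_n\in\div_n(\ovr\Omega_v)$ produces $2\,d\varphi_v$, whose matrix is the Brill--Noether matrix $\big(\omega_k(p_j)\big)$ of a basis of holomorphic differentials evaluated on the points of $D$. By the geometric form of the Riemann--Roch theorem this matrix is invertible precisely when $D$ is \emph{nonspecial}, i.e.\ $\dim L(D)=1$. Thus the \textbf{key lemma} is that every divisor occurring in $\Sb_n$ is nonspecial. This is where the Weierstrass structure \eqref{eq:div} enters: the existence of a meromorphic $g$ with $(g)=D\,p_\infty/(D^\ast p_\infty^\ast)$, $|g|<1$ on $\Omega$ and $g\circ J=1/\ovr{g}$, together with the reflection symmetry carrying $D$ to $D^\ast$, constrains the points of $D$ to impose independent conditions on holomorphic differentials. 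Granting the lemma, $F$ is a submersion along its zero locus and $2\,d\varphi_v$ is an isomorphism onto the $2n$-dimensional fibre of $\mathcal{J}_n$; hence near any $(v,D)\in\Sb_n$ the equation $F=0$ solves uniquely for $D=D(v)$, so $\Sb_n$ is a real-analytic graph over $\Tb_n$, $\nu$ is a local diffeomorphism, and $\dim\Sb_n=\dim\Tb_n=3n-1$.

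Next I would prove properness of $\nu$, which yields both finiteness of the fibres and closedness of the image. Suppose $v_k\to v$ in $\Tb_n$ with $(v_k,D_k)\in\Sb_n$. Since each $\ovr{\Nb}_{v_k}$ is compact and the family varies continuously, the symmetric products $\mathrm{Sym}^n(\ovr{\Nb}_{v_k})$ form a compact family, so after passing to a subsequence $D_k\to D$ in $\mathrm{Sym}^n(\ovr{\Nb}_v)$; the support condition $D_k\in\div_n(\ovr\Omega_{v_k})$ is closed and passes to the limit, and continuity of the periods gives $F(v,D)=0$, whence $(v,D)\in\Sb_n$. Hence $\nu$ is proper. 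Being also a local homeomorphism, its fibres are discrete; being compact as well, they are finite. In fact the fibre over $v$ injects, via $D\mapsto\varphi_v(D)$, into the set $\{x\in\mathcal{J}_v:2x=2\varphi_v(p_\infty^\ast)+T_v\}$, a coset of the $2$-torsion subgroup of the $n$-dimensional torus $\mathcal{J}_v$ and hence of cardinality at most $2^{2n}$, giving an a~priori uniform bound on the number of sheets. (The sharper count $2^n$, obtained by imposing the reality constraint $D\in\div_n(\ovr\Omega_v)$, is carried out in Section~\ref{sec:count}.)

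Finally, a proper local homeomorphism onto the connected, locally compact and locally connected manifold $\Tb_n$ is a covering map: local homeomorphy makes $\nu$ open, properness makes it closed, so $\nu(\Sb_n)$ is a nonempty open and closed subset of the connected $\Tb_n$, hence all of it, and finiteness of the fibres makes the covering finitely sheeted. I expect the \emph{only} genuine difficulty to be the nonspecialty lemma of the second step: dimension counting alone places the relevant half-period classes in the complement of a codimension-two special locus merely generically, so ruling out specialty for \emph{every} member of $\Sb_n$ requires exploiting the constraint $|g|<1$ and the involution $J$ rather than a transversality argument.
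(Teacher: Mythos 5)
Note first that the paper you were given does not actually prove this theorem: it is quoted verbatim from \cite{conelike}, so the relevant comparison is with the proof there, whose architecture (relative Abel--Jacobi map on the analytic family of doubles, implicit function theorem in the fibre direction, properness, and the proper-local-homeomorphism-implies-finite-covering conclusion) your proposal reproduces essentially faithfully. The difficulty is that you have not proved the statement: your entire argument is conditional on the ``key lemma'' that every divisor $D$ with $(v,D)\in\Sb_n$ is nonspecial, and you explicitly grant it rather than prove it, closing with the admission that it is the only genuine difficulty. This is not a peripheral technicality but the mathematical content of the theorem, and it bites \emph{twice} in your argument: once in the implicit function theorem step (invertibility of the Brill--Noether matrix $\bigl(\omega_k(p_j)\bigr)$ fails exactly at special divisors), and once, more silently, in your fibre bound, since the injectivity of $D\mapsto\varphi_v(D)$ on integral divisors of degree $n$ is \emph{equivalent} to nonspecialty (a special class carries a positive-dimensional linear system of integral divisors, all with the same image in $\mathcal{J}_v$). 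Without the lemma, a fibre of $\nu$ could a priori contain a continuum of special divisors satisfying \eqref{eq:spin}, your $2^{2n}$ bound via the $2$-torsion coset says nothing about it, and the whole ``proper local homeomorphism'' scheme collapses. That this danger is real, not hypothetical, is visible already for $n=2$: the hyperelliptic divisors $D=p\cdot\iota(p)$, $p\in\partial\Omega$ (where $\iota$ is the hyperelliptic involution, which commutes with $J$ and maps $\partial\Omega$ to itself), form a one-parameter family of special divisors supported in $\ovr{\Omega}$, all with the same Abel--Jacobi image; ruling out that this family satisfies \eqref{eq:spin} requires an actual argument (in this case, that $\iota$ interchanges $\Omega$ and $\Omega^\ast$, so $p_\infty^\ast$ cannot be a Weierstrass point and $2\varphi_v(p_\infty^\ast)+T_v$ misses the hyperelliptic class), which is precisely the kind of work your proposal defers.

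Two smaller remarks. Your suggested route to the lemma via the constraint $|g|<1$ is likely a red herring: the natural attempts using only the divisor identities \eqref{eq:div} (e.g.\ pairing a hypothetical $\omega$ with $(\omega)\geq D$ against $g\phi_3$, whose divisor is $D^2/(p_\infty^\ast)^2$, or against $\phi_3$ together with the reflected form $\overline{J^\ast\omega}$) produce holomorphic $1$-forms vanishing at $p_\infty$ and $p_\infty^\ast$, which exist for $n\geq 2$ and give no contradiction; what the $n=2$ analysis above indicates is that the antiholomorphic involution $J$ and the position of $p_\infty^\ast$ relative to $\Omega$, rather than the open condition $|g|<1$, carry the load. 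Finally, in the last step you should justify that $\Sb_n$ is nonempty (e.g.\ by the explicit symmetric examples of Section \ref{sec:count}) before concluding that the open and closed image is all of the connected $\Tb_n$; as written this is asserted, not argued.
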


Thus, the number of divisors $D\in\div_n(\overline{\Omega(v)})$ satisfying Equation \eqref{eq:spin}
is a universal constant that depends not on the conformal structure $\Omega(v)$, but only on the number of boundary components
(equivalently, the number of singularities of the maximal graph).
As explained above, each divisor corresponds to a unique congruence class of entire maximal graphs with $n+1$ singularities and conformal
structure $\Omega(v)$. Thus we have the following
\begin{corollary}\label{co:rec}
For each $n\in\N$ there exists a constant $C(n)\in\N$ such that, for any $n$-connected circular domain $\Omega$,
the number of non-congruent entire maximal graph with conformal structure biholomorphic to $\Omega$ is exactly $C(n)$.
\end{corollary}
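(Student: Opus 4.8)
The plan is to read the corollary off from the covering map $\nu\colon\Sb_n\to\Tb_n$ of the preceding theorem, the only genuinely new ingredient being the connectedness of the base space $\Tb_n$. First I would translate the geometric count into a count of fibres. Fix an arbitrary $n$-connected circular domain $\Omega$. After the M\"obius normalization $c_0=0$, $r_0=1$, $c_1\in\r^+$, the conformal class of $\Omega$ is represented by a single point $v\in\Tb_n$, so that $\Omega$ is biholomorphic to $\Omega(v)$. By the discussion leading to Equation \eqref{eq:spin}, together with the Representation Theorem \ref{th:representation}, the non-congruent entire maximal graphs whose conformal structure is biholomorphic to $\Omega$ are in bijective correspondence with the divisors $D\in\div_n(\ovr{\Omega(v)})$ satisfying \eqref{eq:spin}, that is, with the fibre $\nu^{-1}(v)$. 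Consequently the quantity to be computed is exactly the cardinality $|\nu^{-1}(v)|$.

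Next I would use that $\nu$ is a finitely sheeted covering, so the function $v\mapsto|\nu^{-1}(v)|$ is locally constant on $\Tb_n$. To upgrade ``locally constant'' to ``constant'' I must show that $\Tb_n$ is connected. For this I would join any $v\in\Tb_n$ to a fixed standard configuration by an explicit path that stays inside $\Tb_n$: first shrink all the radii $r_1,\dots,r_n$ simultaneously toward $0$, which keeps the $n+1$ discs pairwise disjoint (and disjoint from the fixed unit disc) at every instant, and then translate the centres $c_1,\dots,c_n$ along straight segments to standard positions, keeping $c_1\in\r^+$ and the discs disjoint throughout; since the discs are by then arbitrarily small there is room to route the centres without collisions. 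As the path never leaves $\Tb_n$, the space is path-connected, and therefore $|\nu^{-1}(v)|\equiv C(n)$ for some constant $C(n)\in\N$ depending only on $n$. Combined with the first step and the fact that every circular domain is biholomorphic to some $\Omega(v)$, this yields the statement.

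The step I expect to be the main obstacle is the dictionary in the first paragraph, namely that $\nu^{-1}(v)$ counts congruence classes without redundancy. One has to verify that the residual freedom in the Weierstrass data, the replacement $(g,\phi_3)\mapsto(e^{i\theta}g,A\,\phi_3)$ with $\theta,A\in\r$, matches exactly the admissible congruences (vertical rotations together with homotheties) and the normalization that the end be horizontal, so that distinct divisors give non-congruent graphs while every graph of the prescribed conformal type arises from precisely one divisor. One should also check that the marking (the ordering of the boundary components) neither over- nor under-counts the surfaces. Once this correspondence is pinned down, the covering-plus-connectedness argument closes the proof, and the explicit value $C(n)=2^n$ is then a separate computation carried out by fixing a convenient $\Omega(v)$ in Section \ref{sec:count}.
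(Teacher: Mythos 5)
Your proposal is correct and takes essentially the same route as the paper, which likewise deduces the corollary directly from the finitely sheeted covering $\nu:\Sb_n\to\Tb_n$ of the quoted theorem together with the divisor--congruence-class dictionary established just before it (each divisor $D$ in the fibre over $v$ giving one graph up to vertical rotations and homotheties). The only difference is that you spell out the path-connectedness of $\Tb_n$, which the paper leaves implicit (noting only in Remark \ref{re:comp} that $\Tb_n$ is simply connected); this is a correct and harmless addition.
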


\begin{remark}\label{re:comp}
Since the space $\Tb_n$ is simply-connected, it follows from Corollary \ref{co:rec}
that the number of connected components of $\Sb_n$ is $C(n)$. In particular, the number of connected components of the moduli space of entire maximal
graphs with $n+1$ singularities is also $C(n)$.

Indeed, label ${\mathcal{G}}_n$ as the space of {\em marked} entire maximal graph with horizontal end and $n+1$ singularities,
where a mark means an ordering $m=(q_0,\ldots,q_n)$ of the singular points of the graph. As we commented in Section \ref{sec:intro}, ${\mathcal{G}}_n$
can be endowed with a differentiable structure of manifold of dimension $3n+4$ with coordinates given by $(G,m)\mapsto (m,c)$,
being $c$ the logarithmic growth at the end.
On the other hand, we can consider the map
$$\epsilon: {\mathcal{G}}_n\to\Sb_n\times\L^3\times \S^1\times\R$$
$$\epsilon((G,m))=((v,D),q_0,g(1),h(1))$$
where, if $(g,\phi_3)$ denote the Weierstrass data of the graph, then
\begin{itemize}
\item $(v,D)\in\Sb_n$ is given by the conformal structure of $G$ (with the order in $v\in\Tb_n$ given by the order in $m$),
and the divisor $D$ defined as in Equation \eqref{eq:spin},
\item $q_0$ is the first singular point in $m$,
\item $h:=\frac{\phi_3}{dz}$ (here $z$ means the natural conformal parameter in $\Omega(v)\subset\C$, recall that $1\in\partial\Omega(v)$ for all $v\in\Tb_n$).
\end{itemize}
Then, it is clear from the above explanation that $\epsilon$ is bijective. Moreover, the induced topology in ${\mathcal{G}}_n$ by $\epsilon$ agree
with the one given by its before mentioned differentiable structure, as proved in \cite{conelike}. Thus, the number of connected components of
$\mathcal{G}_n$ is $C(n)$.
\end{remark}


\section{Counting maximal graphs on a given circular domain}\label{sec:count}

As it was showed in the previous section, the number of maximal
graphs that share the same underlying conformal structure only depends on the number of boundary components of the conformal support.
Thus, in this section we will fix an specific circular domain and we
will find out how many non-congruent maximal graphs are defined on that
surface.\\

Let $n\in \n,$ and $a_1<a_2<\ldots <a_{2n+2}\in\r.$
Throughout this section, $\ovr{\Nb}_0$ will denote the
(hyperelliptic) compact genus $n$ Riemann surface associated to the
function $\sqrt{\prod_{j=1}^{2n+2}(z-a_j)},$ that is,
$$\ovr{\Nb}_0:=\{(z,w)\in\ovr{\c}^2\;:\;
w^2=\prod_{j=1}^{2n+2}(z-a_j)\}.$$
And we will also define $\Nb_0=\ovr{\Nb}_0\setminus\{z^{-1}(\infty)\}.$

The surface $\ovr{\Nb}_0$ can be realized as a two sheeted covering
of the Riemann sphere. Indeed, consider two copies of $\ovr{\c}.$
Following \cite{farkas}, we label these copies as sheet $I$ and
sheet $II.$ We "cut" each copy along curves joining $a_{2j+1}$
with $a_{2j+2},$ for any $j=1,\ldots,n.$ We assume that these cuts
does not intersect each others (see Figure \ref{fig:2copias}).
Each cut has two banks: a N-bank and a S-bank. We  recover  the
surface $\ovr\Nb_0$ by identifying  the N-bank (resp. S-bank) of a
cut in the sheet $I$ with the corresponding S-bank (resp. N-bank)
in the sheet $II.$

\begin{figure}[htbp]
\centering
\includegraphics[width=.5\textwidth]{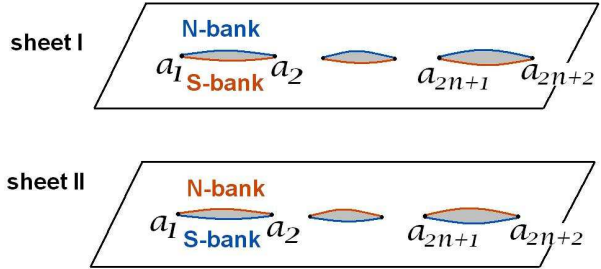}\label{fig:2copias}
\caption{A model for the Riemann surface $\Nb_0$.}
\end{figure}

We denote by $z,w:\ovr\nb_0\to\ovr{\c}$ the two canonical
projections, whose associated divisors are
$$(w)=\frac{a_1\cdot\ldots\cdot
a_{2n+2}}{(p_\infty)^{n+1}\cdot(p_\infty^*)^{n+1}}
\qquad\mbox{and}\qquad
(dz)=\frac{a_1\cdot\ldots\cdot a_{2n+2}}{(p_\infty)^{2}\cdot
(p_\infty^*)^{2}},$$ where $a_j\equiv(a_j,0)$ and $\{p_\infty,p_\infty^*\}=z^{-1}(\{\infty\}).$
We will label $p_\infty$ as the one where the coefficient of degree $-(n+1)$ of the Laurent series of $w$ is $-1$.\\

Finally we define $J_0:\ovr{\Nb}_0\to\ovr{\Nb}_0$ as the antiholomorphic involution
given by $J_0(z,w)=(\ovr{z},-\ovr{w}).$ The fixed points of $J_0$ are
the Jordan curves
$\gamma_j=\{(z,w)\in\ovr{\Nb}_0\;:\;z\in[a_{2j-1},a_{2j}]\},$
$j=1,\ldots,n+1.$ Moreover, $\Nb_0\setminus\cup_{j=1}^{n+1}\gamma_j$
has two connected components, each one
of them corresponding to a single-valued branch of $w$,
and biholomorphic to a $n$-connected circular domain.

\begin{definition}\label{def:omega}
Let $n\in \n,$  and $a_1<a_2<\ldots <a_{2n+2}\in\r.$ Consider the above defined compact Riemann surface
$$\ovr{\Nb}_0:=\{(z,w)\in\ovr{\c}^2\;:\;
w^2=\prod_{j=1}^{2n+2}(z-a_j)\},$$
with the antiholomorphic involution $J_0(z,w)=(\ovr{z},-\ovr{w})$. Label $\Delta$ as the set of fixed points of $J_0$.
We will define $\bar\Omega_0$ as the closure of the connected component of $\ovr\Nb_0\setminus\Delta$
containing $p_\infty$, and $\Omega_0$ will denote the circular domain $\Omega_0:=\bar\Omega_0\setminus\{p_\infty\}$.

\end{definition}

\begin{proposition}\label{pro:caract}
Let $(g,\phi_3)$ be Weierstrass data on $\Omega_0$ of an entire maximal graph with $n+1$ singularities and
horizontal end. Then there exists $n+1$ distinct points
$\{b_1,\ldots,b_{n+1}\}\subset\{a_1,\ldots,a_{2n+2}\}$, such that

\begin{equation}\label{eq:datos}
g=e^{i\theta}\frac{w+P(z)}{w-P(z)} \quad \mbox{and} \quad
\phi_3=A \big( \frac{w}{P(z)} - \frac{P(z)}{w} \big) dz,
\end{equation}
where $P(z)=\prod_{j=1}^{n+1}(z-b_j)$, $\theta\in\r$, and
$A\in\r^\ast$.
\end{proposition}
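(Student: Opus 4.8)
The plan is to pin down the two Weierstrass data by playing the hyperelliptic involution $\tau(z,w)=(z,-w)$ of $\ovr\Nb_0$ off against the mirror involution $J_0$, reducing the whole statement to two symmetry facts: that $\phi_3$ is anti-invariant under $\tau$, and that $g\cdot(g\circ\tau)$ is a unimodular constant. Write $\Pi(z)=\prod_{j=1}^{2n+2}(z-a_j)$, so $w^2=\Pi$. Granting the two facts, the form \eqref{eq:datos} drops out: from $g\circ\tau=e^{2i\theta}/g$ one checks that $F:=\dfrac{e^{-i\theta}g-1}{e^{-i\theta}g+1}$ satisfies $F\circ\tau=-F$, and since every $\tau$-anti-invariant meromorphic function on $\ovr\Nb_0$ has the shape $b(z)\,w$ with $b$ rational, one solves back to get $g=e^{i\theta}\dfrac{w+P}{w-P}$ with $P:=b\,\Pi$ a rational function of $z$; likewise $\tau$-anti-invariance of $\phi_3$ makes $w\,\phi_3/dz$ a $\tau$-invariant, hence rational, function of $z$. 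So the real work is to prove the two symmetries and then to identify $P$.

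First I would establish the anti-invariance of $\phi_3$. Set $\eta:=\phi_3+\tau^\ast\phi_3$. By \eqref{eq:div}, $\phi_3$ is holomorphic off $\{p_\infty,p_\infty^\ast\}$ with at worst simple poles there, and since $\tau$ interchanges these two points the same holds for $\eta$. Being $\tau$-invariant, $\eta$ descends through $z:\ovr\Nb_0\to\ovr\c$ to a meromorphic $1$-form on $\ovr\c$ whose only possible singularity is a simple pole at $z=\infty$; but a nonzero meromorphic $1$-form on $\ovr\c$ has divisor of degree $-2$ and so cannot have a single simple pole as its only pole. Hence $\eta\equiv 0$, i.e. $\tau^\ast\phi_3=-\phi_3$. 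Then $R:=w\,\phi_3/dz$ is $\tau$-invariant, hence a rational function of $z$; computing its divisor from \eqref{eq:div} gives $(R)=\dfrac{D\cdot D^\ast}{p_\infty^{\,n}\,(p_\infty^\ast)^{\,n}}$, so $R$ is a polynomial of degree $n$, while the identity $\sigma^\ast\phi_3=\ovr{\phi_3}$ — where $\sigma:=\tau\circ J_0$, $\sigma(z,w)=(\ovr z,\ovr w)$, obtained by combining $\tau^\ast\phi_3=-\phi_3$ with $J_0^\ast\phi_3=-\ovr{\phi_3}$ — forces $R$ to have real coefficients. By condition $(iii)$ of Theorem \ref{th:representation} the zero divisor of $\phi_3$ equals $D\cdot D^\ast$, so $D\cdot D^\ast$ consists exactly of the $2n$ points lying over the $n$ roots of $R$.

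The second symmetry is then cheap. Because $D\cdot D^\ast=\{R=0\}$ is a union of fibres of $z$, it is automatically $\tau$-invariant; and since $|g|=1$ on $\partial\Omega_0=\cup_j\gamma_j$, neither the zeros ($D$) nor the poles ($D^\ast$) of $g$ can meet $\partial\Omega_0$, so $D$ and $D^\ast$ lie in the interiors of the two components $\Omega_0,\Omega_0^\ast$, which $\tau$ interchanges (note $\tau(p_\infty)=p_\infty^\ast$). Hence $\tau(D)=D^\ast$, and then $g\circ\tau$ has divisor $\dfrac{\tau(D)\,p_\infty^\ast}{\tau(D^\ast)\,p_\infty}=\dfrac{D^\ast\,p_\infty^\ast}{D\,p_\infty}$, which is the divisor of $1/g$; therefore $g\circ\tau=c/g$ for a constant $c$, and evaluating on some $\gamma_j$ (where $|g|=1$ and $\tau(\gamma_j)=\gamma_j$) gives $|c|=1$, so $c=e^{2i\theta}$ with $\theta\in\r$. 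This yields $g=e^{i\theta}\dfrac{w+P}{w-P}$ with $P=P(z)$ rational. A degree count finishes the identification of $P$: writing $P=P_1/P_2$ in lowest terms and using that the common zeros of $w+P$ and $w-P$ occur only over branch points, one finds that $\deg g=n+1$ forces $P_2$ constant and $P$ monic of degree $n+1$ with all its roots among $\{a_1,\dots,a_{2n+2}\}$, i.e. $P=\prod_{j=1}^{n+1}(z-b_j)$ with distinct $b_j\in\{a_1,\dots,a_{2n+2}\}$. Finally, since $\Pi=P\,\widetilde P$ with $\widetilde P:=\Pi/P$, the roots of $R$ coincide with those of $\widetilde P-P$ (both real of degree $n$), whence $R=A(\widetilde P-P)$ for some $A\in\r^\ast$ and
\[
\phi_3=A\,\frac{\widetilde P-P}{w}\,dz=A\Big(\frac{w}{P}-\frac{P}{w}\Big)\,dz,
\]
completing the proof.

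The clean part is the vanishing of $\eta$, which uses only the pole structure of $\phi_3$ on a hyperelliptic surface. The main obstacle is the second symmetry, i.e. proving $g\circ\tau=e^{2i\theta}/g$: everything hinges on the identity $\tau(D)=D^\ast$, which I get by first showing (via anti-invariance of $\phi_3$) that the zero divisor $D\cdot D^\ast$ of $\phi_3$ is a union of $z$-fibres and then using the boundary normalization $|g|=1$ on $\partial\Omega_0$ to separate $D$ from $D^\ast$. The concluding identification of $P$ as a squarefree degree-$(n+1)$ divisor of $\Pi$ is then routine divisor and degree bookkeeping.
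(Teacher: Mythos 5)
Your proposal is correct, but it follows a genuinely different route from the paper's proof. The paper works algebraically: since $\deg g=n+1$ and $\ovr{\Nb}_0$ is hyperelliptic, $g$ and $z$ satisfy a quadratic relation $P_2g^2+P_1g+P_0=0$; the paper solves it, identifies the discriminant $\sqrt{P_1^2-4P_0P_2}$ with $cw$, then introduces an auxiliary degree-$(n+1)$ function $h$ built from $\phi_3$ (producing the divisor $E=F(J(D))$ and an integral divisor $B$ supported at branch points), and finally pins down $G_1=w/P+1$, $G_2=w/P-1$ by a Riemann--Roch computation showing the relevant linear systems are one-dimensional. You instead argue symmetry-first: your descent argument on the quotient sphere showing $\eta=\phi_3+\tau^\ast\phi_3\equiv 0$ (a degree/residue count for $1$-forms on $\ovr{\c}$) does not appear in the paper and immediately gives $R=w\phi_3/dz$ polynomial of degree $n$ with real coefficients; the $\tau$-invariance of $D\cdot D^\ast$ combined with the separation $|g|=1$ on $\partial\Omega_0$ yields $\tau(D)=D^\ast$, hence $g\circ\tau=e^{2i\theta}/g$, and the Cayley transform $F=(e^{-i\theta}g-1)/(e^{-i\theta}g+1)$ linearizes $g$ to $b(z)w$, replacing the paper's quadratic-relation-plus-discriminant step; your final identification of $P$ uses elementary counting instead of Riemann--Roch. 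That last step is the only place you compress, and the quickest completion is worth recording: every branch point is fixed by $\tau$, so $g=\pm e^{i\theta}$ there; since each of these two values is attained exactly $n+1$ times by a degree-$(n+1)$ map while there are $2n+2$ branch points, each branch point is a simple preimage and these exhaust both fibers; writing $g-e^{i\theta}=2e^{i\theta}P/(w-P)$ and $g+e^{i\theta}=2e^{i\theta}w/(w-P)$ then forces $P=P_1/P_2$ to have exactly $n+1$ simple roots, all among $\{a_1,\ldots,a_{2n+2}\}$, and no poles (so $P_2$ is constant), while $g(p_\infty)=0$ together with the normalization $w\sim -z^{n+1}$ at $p_\infty$ makes $P$ monic of degree $n+1$; the multiplicity matching in $R=A(\widetilde{P}-P)$ follows from condition $(iii)$ of Theorem \ref{th:representation} via $w+P=P(\widetilde{P}-P)/(w-P)$ at points of $D$. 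In exchange for these local checks, your approach is more self-contained and elementary --- no Riemann--Roch, no auxiliary function $h$ --- because it exploits the hyperelliptic involution directly, whereas the paper's argument stays within the divisor and linear-system machinery of \cite{conelike}.
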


\begin{proof}
By Theorem \ref{th:representation}, the associated divisors to
$(g,\phi_3)$ are given by
\begin{equation}\label{eq:divisor}
(g)=\frac{D\cdot p_\infty}{J(D)\cdot
p_\infty^\ast}\quad\mbox{and}\quad (\phi_3)=\frac{D\cdot
J(D)}{p_\infty\cdot  p_\infty^\ast}
\end{equation}
where $D\in Div_n(\overline{\Omega}_0).$ Here,
$p_\infty$ denotes the point in $\ovr{\Omega}_0\cap z^{-1}(\infty)$, and $p_\infty^\ast=J(p_\infty)$.

We will denote by $F:\ovr{\Nb}_0\to\ovr{\Nb}_0$ the holomorphic
involution given by $F(z,w)=({z},-{w})$.

\begin{assertion}\label{claim1}
In the above conditions there exist $n+1$ distint points $\{b_1,\ldots,b_{n+1}\}\subset\{a_1,\ldots,a_{2n+2}\}$, such that $g=\frac{G_1}{G_2}$ for two meromorphic functions $G_1$, $G_2$ on
$\overline\Nb_0$ satisfying
\item \begin{enumerate}[a)]
\item $(G_1)\geq\displaystyle{\frac{p_\infty^*}{b_1\cdot\ldots\cdot b_{n+1}}}$
\item$(G_2)\geq\displaystyle{\frac{p_\infty}{b_1\cdot\ldots\cdot b_{n+1}}}$
\end{enumerate}
\end{assertion}

Since $g$ has degree $n+1$ and $\overline\Nb_0$ is hyperelliptic, the two meromorphic
functions $g$ and $z$ satisfy a relation $P(g,z)=0,$ where $P$ is
a polynomial in two variables with algebraic degree two in the
first one and $n+1$ in the second (see \cite{farkas}). We can rewrite this
relation as $P_2(z)g^2+P_1(z)g+P_0(z)=0,$ with $P_i$ polynomials
whose maximum algebraic degree is $n+1.$ Solving this equation we
obtain
$$g=\frac{-P_1\pm \sqrt{P_1^2-4P_0P_2}}{2P_2}.$$

Consider the meromorphic function
$f=\sqrt{P_1^2-4P_0P_2}=\pm(2gP_2+P_1).$ Let us check that $f=c
w,$ for some constant $c\in\r^*.$
Indeed, any meromorphic function on the hyperelliptic surface $\ovr{\Nb}_0$ can be expressed
as $f=R_1(z)+R_2(z)w,$ with $R_i$ rational functions (see \cite{farkas}).
In our case, $f^2$ is a polynomial function in $z$, and so it
follows that either $R_1=0$ or $R_2=0.$ The last case would imply
that $g$ is a rational function of $z,$ which is impossible from
Equation (\ref{eq:divisor}) so $f=R_2(z)w.$ Now observe that $f$
has poles only at $p_\infty$ and $p_\infty^\ast$ with order at
most $n+1$, which implies that $f/w$ is a holomorphic function on
$\ovr{\Nb}_0,$ and therefore constant. Thus, $f=cw$ for some
$c\in\r^\ast$.
Up to replace $P_i$ by $\pm cP_i,$ $i=1,2,$  we can suppose that
$$g=\frac{ P_1+w }{2 P_2}.$$ We will also assume that the leading
coefficient of $P_1$ is one. Since $P_1$ and $P_2$ are meromorphic functions of degree $\leq 2(n+1)$ that only depend on $z$,
 it is not hard to realize that \eqref{eq:divisor}
implies that $$(P_1+w)=\frac{D\cdot
E}{(p_\infty)^{n-1}\cdot(p_\infty^*)^{n+1}}
\quad\mbox{and}\quad (P_2)=\frac{J(D)\cdot
E}{(p_\infty)^n\cdot(p_\infty^*)^n},$$
where $E:=F(J(D)\in Div_n(\overline\Omega_0)$. Thus, the
meromorphic function $$h=\frac{P_2(P_1+w)}{w\prod_{e\in E}
(z-z(e))}\frac{dz}{\phi_3}$$
satisfies that $(h)=\frac{E\cdot p_\infty}{F(E)\cdot
p_\infty^*}=(\frac{1}{h}\circ F),$ and therefore up to a multiplying constant $h\circ F=1/h.$
On the other hand, $\mbox{deg}(h)=n+1$, and reasoning as before we can deduce that $h=(\hat{P}_1(z)+w)/\hat{P}_2(z),$
for some $\hat{P}_i(z)$ polynomial functions in $z$ with algebraic
degree less than or equal to $n+1.$ Since $h\circ F=1/{h},$ we
infer that $w^2=\hat{P}_1^2-\hat{P}_2^2$ and so, setting
$S=-\hat{P}_1-\hat{P}_2$ we can write $h=(S-w)/(S+w).$

Looking at the divisor of $h$ is immediate to realize that there exists an
integral divisor $B$ with $\deg B=n+1$ such that:
$$(S-w)=\frac{E\cdot B}{p_\infty^{n} \cdot(p_\infty^*)^{n+1}}
\quad \textrm{and} \quad (S+w)=\frac{F(E)\cdot
B}{p_\infty^{n+1}\cdot (p_\infty^*)^{n}}.$$ Since points in $B$
are zeros of both $S+w$ and $S-w$, they must be $n+1$ distinct
(recall that $w$ only has simple zeroes) points of $\{a_1\dots
a_{2n+2}\}.$
Setting $G_1=\displaystyle{\frac{{P}_1+w}{S-w}}$ and $G_2=\displaystyle{\frac{2{P}_2}{S-w}}$ the claim is proved.\\

\begin{assertion}\label{claim2} Up to multiplicative constants, the functions $G_1$ and $G_2$ in Claim \ref{claim1} are given by
$G_1= \displaystyle{\frac{w}{P(z)}+1} $  {and}  $G_2=\displaystyle{ \frac{w}{P(z)}-1},$ being
$P(z)=\prod_{j=1}^n(z-b_j)$.
\end{assertion}

Call $B$ to the integral divisor given by $B=b_1\cdot\ldots\cdot
b_{n+1}$.
By Riemann-Roch Theorem,  the
dimension of the linear space of meromorphic functions on
$\overline{\Nb}_0$ satisfying condition $a)$ (resp. $b)$) in
 Claim \ref{claim1} is $1+d$ where $d$ is the dimension of the linear space
of meromorphic 1-forms $\nu$ on $\overline{\Nb}_0$ satisfying
$(\nu)\geq \frac{B}{p_\infty^*}$ (resp. $(\nu)\geq
\frac{B}{p_\infty}$). Let us see that $d=0.$

Indeed, observe first that by the residues theorem, both spaces agree with the space $L(B)$ of holomorphic $1$-forms $\nu$ with $(\nu)\geq B$.
But since $\{ \displaystyle{\frac{dz}{w}}, z\displaystyle{\frac{dz}{w}},\ldots,
z^{n-1}\displaystyle{\frac{dz}{w}}\}$ is a basis for the space of holomorphic
1-forms on $\overline{\Nb}_0,$ any $\nu\in L(B)$ must be of the form
 $\nu=P(z)\frac{dz}{w},$ where $P$ is a polynomial with
algebraic degree less than $n.$ Thus, if a Weierstrass point
$a_{j_0}$ is a zero of $\nu$ then its order is at least two. It
follows that the number of zeroes of the holomorphic 1-form $\nu$
is at least $2(n+1)$ which is impossible because $\ovr{\Nb}_0$ has
genus $n.$

Therefore the dimension of the linear space of meromorphic
functions satisfying condition $ a)$ (resp. $ b)$) in the
Claim \ref{claim1} is $1.$ It is easy to show that the function
$\frac{w}{\prod_{j=1}^{n}(z-b_j)}+1$  (resp.
$\frac{w}{\prod_{j=1}^{n}(z-b_j)}-1$) is a basis for this space,
so Claim \ref{claim2} is proved.

\vspace*{.5cm}

As a consequence of the previous claims, we can write: $$g= \frac{G_1}{G_2}=
c\,\frac{w+P(z)}{w-P(z)},$$ for a suitable constant $c\in\c^\ast$. As $g\circ J=1/\overline{g}$ we infer
that $c=e^{i\theta}$ for some $\theta\in\R$.

To finish observe that the divisor of $\phi_3$ coincides with the
divisor for the 1-form $ \big( \frac{w}{P(z)} - \frac{P(z)}{w}
\big) dz,$ and as a consequence $$\phi_3=A\, \big(
\frac{w}{P(z)} - \frac{P(z)}{w} \big) dz,$$ since
$J^*(\phi_3)=-\overline{\phi_3}$  we get $A\in\r.$ This concludes
the proof.
\end{proof}


To finish the classification  of the entire maximal graphs on the given circular domain $\Omega_0$ we need to
find out when the pair given by \eqref{eq:datos} are actually Weierstrass data. This is done in the following proposition.
Figure \ref{fig:alineados} shows two examples of the surfaces given by these Weierstrass representation.

\begin{proposition}\label{pro:b}
Choose $b_1<b_2<\ldots< b_{n+1}$ points in
$\{a_1,\ldots,a_{2n+2}\}$, and define
$P(z)=\prod_{j=1}^{n+1}(z-b_j)$.

Then the pair $(g,\phi_3)$ given by Equation \eqref{eq:datos} are
Weierstrass data on $\Omega_0$ of an entire maximal graph with $n+1$
singularities if and only if $b_j\in\{a_{2j-1},a_{2j}\}$ for all
$j=1,\ldots,n+1$.
\end{proposition}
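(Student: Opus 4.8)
My plan is to show that, for an \emph{arbitrary} choice of distinct points $b_1<\dots<b_{n+1}$ in $\{a_1,\dots,a_{2n+2}\}$, the pair \eqref{eq:datos} already satisfies every requirement of Theorem \ref{th:representation} \emph{except possibly} the spacelike inequality $|g|<1$ on $\Omega_0$, so that the whole statement reduces to deciding when this last condition holds. Indeed, the symmetries $g\circ J=1/\ovr g$ and $J^*\phi_3=-\ovr{\phi_3}$ follow by direct substitution, using that $P$ has real coefficients; and a short divisor computation, writing $Q\df\prod_{j}(z-a_j)$ and $\phi_3=A\,(Q-P^2)/(Pw)\,dz$, shows that $g$ has degree $n+1$, that $\phi_3$ has poles of order at most two and only at $p_\infty,p_\infty^*$, and that the zeros of $\phi_3$ coincide with the zeros and poles of $g$ (the $2n$ points lying over the $n$ roots of $(Q-P^2)/P$). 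Thus $(i)$--$(iii)$ hold for every admissible $b_j$, and only $|g|<1$ remains to be characterized. Setting $R\df w/P$, one has $g=e^{i\theta}(R+1)/(R-1)$, and since $|g|<1\iff\mathrm{Re}(R)<0$, the problem becomes: for which $\{b_j\}$ is $\mathrm{Re}(w/P)<0$ throughout $\Omega_0$?

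The key step, and the source of the necessity, is the analysis of $R$ on the real locus of $\Omega_0$, i.e. on the arcs where $Q>0$ and $w$ is real: the segments $(a_{2k},a_{2k+1})$ together with $(-\infty,a_1)$ and $(a_{2n+2},\infty)$. Continuing the branch of $w$ on $\Omega_0$ from $z\to+\infty$, where $w\sim -z^{n+1}<0$, and tracking the argument of each factor $\sqrt{z-a_i}$, I would establish that on such an arc the signs of $w$ and of $P$ are controlled by the parities of $m=\#\{a_i>x\}$ and $p=\#\{b_j>x\}$; explicitly, $R<0$ on $(a_{2k},a_{2k+1})$ precisely when $m/2\equiv p\pmod 2$. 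This holds on the outer arc $(a_{2n+2},\infty)$, where $m=p=0$; and crossing a slit $[a_{2k+1},a_{2k+2}]$ flips the parity of $m/2$ and changes that of $p$ by the number $s_k$ of $b_j$ in $\{a_{2k+1},a_{2k+2}\}$. Hence $R<0$ on \emph{every} arc forces $s_k$ to be odd, i.e. $s_k=1$, for each of the $n+1$ slits; since there are exactly $n+1$ points $b_j$, this means one $b_j$ is taken from each pair $\{a_{2j-1},a_{2j}\}$, which by the ordering is the asserted condition $b_j\in\{a_{2j-1},a_{2j}\}$. Conversely, if it fails, then $R>0$ at interior points of some real arc, so $|g|>1$ there and \eqref{eq:datos} is not spacelike; this proves necessity.

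For sufficiency I would upgrade this sign information to all of $\Omega_0$ via the maximum principle for the harmonic function $\mathrm{Re}(R)$, which vanishes on $\partial\Omega_0$ away from the chosen branch points and is harmonic in the interior (the only poles of $R$ are at the $b_j\in\partial\Omega_0$). Near such a $b_j$ one has $R\sim\kappa/t$ in the local coordinate $t=\sqrt{z-b_j}$, under which $\Omega_0$ is a half-plane and the slit is the real $t$-axis; since $R$ is purely imaginary on the slit, $\kappa$ is purely imaginary, and the sign of $\mathrm{Re}(R)$ as $t\to 0$ inside $\Omega_0$ equals the sign of $R$ on the adjacent real arc, which is negative under the one-per-pair condition. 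Thus $\mathrm{Re}(R)\to-\infty$ at every $b_j$, so $\mathrm{Re}(R)$ is bounded above on $\Omega_0$ with boundary values $\le 0$, and the maximum principle yields $\mathrm{Re}(R)<0$ throughout $\Omega_0$, i.e. $|g|<1$; combined with the first paragraph, \eqref{eq:datos} are then genuine Weierstrass data.

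The main obstacle is the bookkeeping in the second step: correctly continuing the single-valued branch of $w$ on the non-simply-connected domain $\Omega_0$ and reading off the sign of $R$ on each arc, together with the local normal form $R\sim\kappa/t$ at the boundary branch points that is needed to run the maximum principle. Once the parity rule and this normal form are in place, both implications are immediate, and the admissible divisors are exactly the one-per-pair choices $b_j\in\{a_{2j-1},a_{2j}\}$.
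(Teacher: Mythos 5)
Your proposal is correct, and it takes a genuinely different route from the paper's proof. The paper splits the two implications along different lines: for necessity it observes that $g^{-1}(1)=\{b_1,\ldots,b_{n+1}\}$ and invokes the fact that the Gauss map of a graph with conelike singularities must hit the value $1$ exactly once on each boundary component, so one $b_j$ must lie on each slit $[a_{2j-1},a_{2j}]$; for sufficiency it computes $dg$ explicitly, shows via the inequality between $\sum_j 1/(z-b_j)$ and $\sum_j 1/(z-c_j)$ that $g$ has no critical points on $\partial\Omega_0$, and then combines injectivity of $g$ on each boundary circle with $\deg(g)=n+1$ and $g(p_\infty)=0$ to conclude $|g|<1$ on $\Omega_0$. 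You instead run both implications through the single quantity $R=w/P$, using $|g|<1\iff\mathrm{Re}(R)<0$: necessity comes from the sign/parity bookkeeping of $R$ on the real arcs $(a_{2k},a_{2k+1})$ interior to $\Omega_0$ (your rule $\mathrm{sign}(w)=-(-1)^{m/2}$, $\mathrm{sign}(P)=(-1)^p$ checks out, with the outer arc anchored by $w\sim -z^{n+1}$, which is exactly the paper's normalization of $p_\infty$), and sufficiency from the maximum principle for the harmonic function $\mathrm{Re}(R)$, which vanishes on $\partial\Omega_0\setminus\{b_j\}$ and blows down to $-\infty$ at each chosen branch point. This buys a more self-contained, potential-theoretic argument: it avoids the critical-point computation entirely and, on the necessity side, does not need the one-point-per-component property of the Gauss map at conelike singularities (a fact imported from the literature), detecting the failure of the spacelike condition directly at interior points. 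The paper's route, in exchange, stays closer to the geometry of the Gauss map and yields the boundary injectivity of $g$ explicitly.

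Two small points to tidy, neither a gap. First, at a \emph{right} endpoint $b_j=a_{2j}$ the slit corresponds to the imaginary $t$-axis in the coordinate $t=\sqrt{z-b_j}$ (and the adjacent real arc to the real axis), not the configuration you describe for left endpoints; this is a harmless rotation $t\mapsto it$ and your conclusion $\kappa\in i\R$, with the sign of $\mathrm{Re}(R)$ near $b_j$ matching the sign of $R$ on the adjacent arc, persists verbatim. Second, for the maximum principle you should note explicitly that $R$ extends holomorphically across the puncture $p_\infty$ with $R(p_\infty)=-1$: this both removes the only potential interior singularity of $\mathrm{Re}(R)$ and guarantees $\mathrm{Re}(R)\not\equiv 0$, so the strong maximum principle gives the strict inequality $\mathrm{Re}(R)<0$, i.e.\ $|g|<1$, on all of $\Omega_0$.
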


\begin{proof}

We just have to check the conditions stated in Theorem
\ref{th:representation}. Recall that $J(z,w)=(\bar{z},-\bar{w})$,
and define $Q(z)=w^2/P(z)=\prod_{j=1}^{n+1}(z-c_j)$. For
simplicity, we will assume that $\theta=0$ and $A=1$.

Conditions $(ii)$ and $(iii)$ are straightforward for all the
possible values of $b_1,\ldots,b_{n+1}$. Let us show when $(i)$ is
accomplished.

First, notice that $g^{-1}(1)=\{b_1,\ldots,b_{n+1}\}$. In
particular, $\deg(g)=n+1$. In particular, in order to be $g$ the Gauss map of a maximal
surface with conelike singularities, any connected component in
$\partial\Omega_0$ must have exactly one point with $g=1$, and so $b_j\in\{a_{2j-1},a_{2j}\}$ for every $j=1,\ldots, n+1$.

Conversely, assume that $b_j\in\{a_{2j-1},a_{2j}\}$, $j=1,\ldots, n+1$, and let us show that
$g$ has no critical points on
$\partial\Omega_0\equiv\cup_{j=1}^{n+1}[a_{2j-1},a_{2j}]$. After some
computations one easily gets that
$$dg=\frac{QdP-PdQ}{w(Q+P-2w)}.$$ Thus for critical points in
$\Nb_0=\ovr{\Nb}_0\setminus\{z^{-1}(\infty)\}$ we have $QdP=PdQ$, or
equivalently, $$\sum_{j=1}^{n+1} \frac{1}{z-b_j} =
\sum_{j=1}^{n+1} \frac{1}{z-c_j}.$$

If we assume that $b_j\in\{a_{2j-1},a_{2j}\}$ for all
$j=1,\ldots,n+1,$ and we have a point $p_0\in
[a_{2j_0-1},a_{2j_0}]\subset \partial\Omega_0$, with
$a_{2j_0-1}=b_{j_0}$ and $a_{2j_0}=c_{j_0}$ (the case
$a_{2j_0-1}=c_{j_0}$ and $a_{2j_0}=b_{j_0}$ is similar) then we
have that $$ \frac{1}{z(p_0)-c_j} < \frac{1}{z(p_0)-b_{j+1}}<0,
\quad j=1,\ldots n+1,$$
(here we use the convention $b_{n+2}=b_1$), and this gives that $p_0$ cannot be a critical point
of $g$.

To finish just notice that $g\circ J =1/\bar{g}$ and therefore
$|g|=1$ on the $n+1$ connected components of $\partial\Omega_0$. Since
$g$ is injective on each one of these curves, and $\deg(g)=n+1$,
then $|g|\neq 1$ on $\Nb_0\setminus\partial\Omega_0$. Taking into
account that $g(p_\infty)=0$ we have that $|g|<1$ on $\Omega_0$.
\end{proof}


\begin{definition}\label{def:wei}
Let $\Omega_0$ the circular domain given in Definition \ref{def:omega} for some real numbers $a_1<\ldots<a_{2n+2}$.
For each subset $\tau=\{b_1,\ldots,b_{n+1}\}\subset\{a_1,\ldots,a_{2n+2}\}$ with $b_j\in\{a_{2j-1},a_{2j}\}$, $j=1,\ldots,n+1$, we will define the
 $G_\tau$ as the entire maximal graph with $n+1$ singularities with Weierstrass data $(g_\tau,\phi_3^\tau)$ on $\Omega_0$ given by
$$ g_\tau= = \frac{w+P(z)}{w-P(z)} \quad \mbox{and} \quad
\phi_3^\tau=  \big( \frac{w}{P(z)} - \frac{P(z)}{w} \big) dz,
$$
where $P(z)=\prod_{j=1}^{n+1}(z-b_j)$.
\end{definition}

\begin{figure}[htbp]
\centering
\includegraphics[width=.8\textwidth]{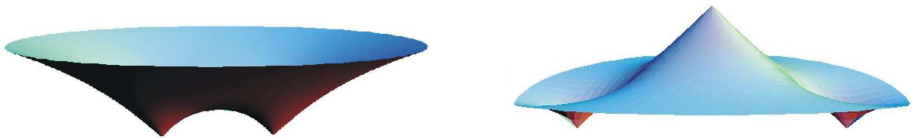}\label{fig:alineados}\caption{Two examples of the surfaces obtained for $n=1$ and $n=2$}
\end{figure}


\begin{theorem}
Let $\Omega_0$ be the $n$-connected circular domain given in Definition \ref{def:omega}.
Then the number of non-congruent entire maximal graphs whose underlying conformal structure is $\Omega_0$ is exactly $2^{n}.$
\end{theorem}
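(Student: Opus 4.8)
The plan is to turn the problem into a counting of divisors. By Proposition \ref{pro:caract} every entire maximal graph with conformal structure $\Omega_0$ and horizontal end has Weierstrass data of the form \eqref{eq:datos} for some choice of $n+1$ points $\{b_1,\dots,b_{n+1}\}\subset\{a_1,\dots,a_{2n+2}\}$, and by Proposition \ref{pro:b} such a pair is genuine Weierstrass data precisely when $b_j\in\{a_{2j-1},a_{2j}\}$ for every $j$. Hence the graphs on $\Omega_0$ are exactly the surfaces $G_\tau$ of Definition \ref{def:wei}, one for each of the $2^{n+1}$ admissible subsets $\tau$. By the discussion in Section \ref{sec:first} leading to Corollary \ref{co:rec}, non-congruent graphs with a fixed conformal structure correspond bijectively to the distinct divisors $D$ of \eqref{eq:div}; writing $D_\tau$ for the divisor attached to $G_\tau$, the theorem thus reduces to showing that the assignment $\tau\mapsto D_\tau$ takes exactly $2^{n}$ values.

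First I would exhibit the involution responsible for halving the count. Let $\tau^{c}$ denote the complementary choice obtained by replacing each $b_j$ by the other endpoint $c_j$ of its pair, so that $P(z)=\prod_j(z-b_j)$ is replaced by $Q(z)=w^2/P(z)=\prod_j(z-c_j)$. A direct substitution gives $g_{\tau^{c}}=-g_\tau$ and $\phi_3^{\tau^{c}}=-\phi_3^{\tau}$. In the representation \eqref{eq:repr} the replacement $(g,\phi_3)\mapsto(-g,-\phi_3)$ leaves the first two coordinates of $X$ unchanged and reverses the third, that is, it realizes the ambient reflection $(x_1,x_2,x_3)\mapsto(x_1,x_2,-x_3)$, which is an isometry of $\L^3$. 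Therefore $G_{\tau^{c}}$ is congruent to $G_\tau$; equivalently $-g_\tau$ has the same divisor as $g_\tau$, so $D_{\tau^{c}}=D_\tau$. Since $\tau\mapsto\tau^{c}$ is a fixed-point-free involution on the $2^{n+1}$ admissible choices, there are at most $2^{n}$ distinct divisors.

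The harder half, and the step I expect to be the main obstacle, is the converse rigidity statement: $D_\tau=D_{\tau'}$ forces $\tau'\in\{\tau,\tau^{c}\}$. Here I would use that a meromorphic function on the compact surface $\overline{\Nb}_0$ is determined by its divisor up to a multiplicative constant, so that $D_\tau=D_{\tau'}$ makes $g_\tau$ and $g_{\tau'}$ have the same divisor and hence $g_{\tau'}=\lambda\,g_\tau$ for some $\lambda\in\C^\ast$. The constant is then pinned down by the value distribution of the Gauss map on the Weierstrass points: a short computation using the identity $\frac{g_\tau-1}{g_\tau+1}=\frac{P(z)}{w}$ shows that $g_\tau$ equals $1$ at the branch points $\{(b_j,0)\}$ and $-1$ at the complementary branch points $\{(c_j,0)\}$, and since these two families exhaust all $2n+2$ Weierstrass points of $\overline{\Nb}_0$, both $g_\tau$ and $g_{\tau'}$ take values in $\{1,-1\}$ at every branch point. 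Evaluating $g_{\tau'}=\lambda\,g_\tau$ at a branch point where $g_\tau=1$ then forces $\lambda\in\{1,-1\}$; the case $\lambda=1$ gives $g_{\tau'}=g_\tau$, whence $g_{\tau'}^{-1}(1)=g_\tau^{-1}(1)$ and $\tau'=\tau$, while $\lambda=-1$ gives $g_{\tau'}=-g_\tau$, whence $g_{\tau'}^{-1}(1)=g_\tau^{-1}(-1)$ and $\tau'=\tau^{c}$.

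Combining the two halves, $\tau\mapsto D_\tau$ is exactly two-to-one with fibres $\{\tau,\tau^{c}\}$, so it takes precisely $2^{n+1}/2=2^{n}$ values. As every entire maximal graph with conformal structure $\Omega_0$ is some $G_\tau$ and, by Corollary \ref{co:rec}, such graphs are non-congruent exactly when their divisors differ, the number of non-congruent entire maximal graphs supported on $\Omega_0$ is exactly $2^{n}$. The argument is insensitive to the mildly degenerate configurations $\sum_j b_j=\sum_j c_j$ (where the zero of $g_\tau$ at $p_\infty$ fails to be simple), since the rigidity step reads off $\lambda$ only from the branch-point values of the Gauss map and never from its behavior at the end.
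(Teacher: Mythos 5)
Your proof is correct, and its skeleton coincides with the paper's: Propositions \ref{pro:caract} and \ref{pro:b} reduce the problem to the $2^{n+1}$ admissible choices of $\tau$, and the complement involution $\tau\mapsto\tau^{c}$ --- realized on Weierstrass data by $(g,\phi_3)\mapsto(-g,-\phi_3)$, i.e.\ by the ambient isometry $(x_1,x_2,x_3)\mapsto(x_1,x_2,-x_3)$ --- halves the count; this is exactly the identification the paper uses. Where you genuinely diverge is in how the remaining $2^{n}$ classes are shown to be pairwise non-congruent. The paper handles this tersely: it normalizes $b_1=a_1$ and then kills the residual freedom $(\theta,A)$ by imposing $g(a_1)=h(a_1)=1$, implicitly leaning on the Section \ref{sec:first} correspondence between divisors satisfying \eqref{eq:div} and congruence classes to conclude that distinct admissible choices with $b_1=a_1$ yield non-congruent graphs. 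You instead prove the needed injectivity outright as a divisor-rigidity lemma: if $D_\tau=D_{\tau'}$ then $g_\tau$ and $g_{\tau'}$ share a divisor on the compact surface $\overline{\Nb}_0$, so $g_{\tau'}=\lambda g_\tau$ with $\lambda\in\C^\ast$; the identity $(g_\tau-1)/(g_\tau+1)=P(z)/w$ pins the values of every such Gauss map at the $2n+2$ Weierstrass points to $\{1,-1\}$ (value $1$ at the $(b_j,0)$, value $-1$ at the $(c_j,0)$), which forces $\lambda\in\{1,-1\}$ and hence $\tau'\in\{\tau,\tau^{c}\}$. This step is sound (I checked the branch-point computation and the $2$-to-$1$ bookkeeping) and is a genuine addition: it makes explicit what the paper's normalization glosses over, and, as you observe, it is insensitive to the degenerate configurations $\sum_j b_j=\sum_j c_j$ where $g_\tau$ vanishes to higher order at $p_\infty$. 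One cosmetic caveat: by Proposition \ref{pro:caract} the graphs on $\Omega_0$ are the $G_\tau$ only up to the $(e^{i\theta}g,A\phi_3)$ freedom, i.e.\ up to vertical rotations and homotheties, so ``exactly the surfaces $G_\tau$'' should read ``exactly the $G_\tau$ up to congruence''; since $D_\tau$ is unchanged under that freedom, your count is unaffected. Both your argument and the paper's ultimately rest on the same input from Section \ref{sec:first}, namely that distinct divisors $D$ in \eqref{eq:div} correspond to distinct congruence classes, so your proposal is a correct and slightly more self-contained rendering of the paper's proof.
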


\begin{proof}
From Propositions \ref{pro:caract} and \ref{pro:b} we know that any maximal graph $G$ with horizontal end defined on $\Omega_0$
have Weierstrass data $(g=e^{i\theta} g_\tau,\phi_3=A\phi_3^\tau),$ where $\theta\in\R$, $A\in\R^\ast$ and $(g_\tau,\phi_3^\tau)$
are given by Definition \ref{def:wei}.

Observe that replacing the set $\tau$ by its complementary $\{a_1,\ldots,a_{2n+2}\}\setminus \tau$
gives congruent surfaces (more specifically, $(g,\phi_3)$ are transform into $(-g,-\phi_3)$).
So, we can assume without loss of generality that $b_1=a_1$.
To avoid congruences, we will also normalize so that $g(a_1)=h(a_1)=1$,
where $h=\displaystyle{\frac{\phi_3}{\frac{dz}{w}}}$. Looking at the expressions for $g$ and $\phi_3$ this means that $\theta=0,$ $A=1$.
Thus, the number of non-congruent maximal graphs defined on $\Omega_0$ is the number of possible choices of
$b_j\in\{a_{2j-1},a_{2j+1}\},$ $j=2,\ldots, n+1$, which is $2^n$.
\end{proof}

Taking into account our previous discussion in Section \ref{sec:first}, we can conclude that:

\begin{theorem}\label{th:main}
The number of non-congruent entire maximal graphs with the same conformal structure is $2^n$, where $n+1$ is the number of (conelike) singularities.

Equivalently, the number of connected components of the space ${\mathcal{G}}_n$ of entire marked maximal graphs with $n+1$ singularities and
horizontal end is $2^n$.
\end{theorem}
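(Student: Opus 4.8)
The plan is to assemble pieces already in place rather than to prove anything new. The first assertion---that a fixed conformal structure supports exactly $2^n$ non-congruent graphs---splits into an invariance statement and a single computation. By Corollary \ref{co:rec}, the number of non-congruent entire maximal graphs whose conformal structure is biholomorphic to a given $n$-connected circular domain $\Omega$ is a universal constant $C(n)$, depending only on $n$ and not on $\Omega$. Consequently it suffices to evaluate $C(n)$ on any single convenient representative.

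For that representative I would take the hyperelliptic domain $\Omega_0$ of Definition \ref{def:omega}, for which the count has just been carried out. Propositions \ref{pro:caract} and \ref{pro:b} identify the admissible Weierstrass data $(g,\phi_3)$ on $\Omega_0$ with the choices $b_j\in\{a_{2j-1},a_{2j}\}$, and the normalization argument of the preceding theorem shows that, after fixing $b_1=a_1$ and setting $\theta=0$, $A=1$ to kill the remaining congruences, exactly $2^n$ non-congruent graphs remain. Thus $C(n)=2^n$, and by the invariance of $C(n)$ this value holds for every $n$-connected circular domain, proving the first statement.

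For the equivalent formulation in terms of $\mathcal{G}_n$, I would invoke Remark \ref{re:comp}. There the map $\epsilon\colon \mathcal{G}_n\to\Sb_n\times\L^3\times\S^1\times\R$ is shown to be a bijection compatible with the differentiable structures, so $\mathcal{G}_n$ and $\Sb_n\times\L^3\times\S^1\times\R$ have the same number of connected components; since the three trailing factors are connected, this number equals the number of components of $\Sb_n$. Finally, because $\nu\colon\Sb_n\to\Tb_n$ is a finitely sheeted covering and $\Tb_n$ is simply connected, $\Sb_n$ splits into exactly $C(n)$ connected components, one for each sheet. Combining with $C(n)=2^n$ gives the count $2^n$ for the components of $\mathcal{G}_n$.

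I do not expect a genuine obstacle here, since the analytic work was completed in Sections \ref{sec:first} and \ref{sec:count}; the only point demanding care is the bookkeeping in Remark \ref{re:comp}, namely verifying that the congruence classes counted by $C(n)$ correspond bijectively to the sheets of $\nu$ and that passing to the product with the connected fiber $\L^3\times\S^1\times\R$ neither creates nor merges components.
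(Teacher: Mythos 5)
Your proposal is correct and follows essentially the same route as the paper: the paper also combines the universality of $C(n)$ from Corollary \ref{co:rec} with the computation $C(n)=2^n$ on the specific domain $\Omega_0$ from Section \ref{sec:count}, and appeals to Remark \ref{re:comp} for the statement about connected components of $\mathcal{G}_n$. Your extra care in checking that the connected factors $\L^3\times\S^1\times\R$ do not affect the component count is a sound (if routine) addition that the paper leaves implicit.
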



\section{Maximal graphs with coplanar singularities}\label{sec:cons}

We will prove now that the surfaces constructed in the previous section are characterized
by the property of having all its singularities on a plane orthogonal to the limit  normal vector at infinity. In particular, for $n=1$, surfaces obtained in Section \ref{sec:count} describe  the whole moduli space of the entire maximal graphs with two singular points.

\begin{theorem}\label{th:alineados}
Let $G\subset\l^3$ be an entire maximal graph with $n+1$
conelike singularities. Then $G$ has all its singularities lying on a timelike plane in $\L^3$ orthogonal (in the Lorentzian sense)
to the normal vector at the end if and only if
$G$ is congruent to one of the examples given in Definition \ref{def:wei}.
\end{theorem}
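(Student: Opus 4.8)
The plan is to reduce the entire statement to the presence of a single Lorentzian symmetry. After a rigid motion we may assume the limit normal at the end is $N_\infty=(0,0,1)$ (horizontal end) and that the candidate plane is $\Pi=\{x_1=0\}$; note that $\Pi$ is timelike, that its Lorentzian normal $(1,0,0)$ satisfies $\langle(1,0,0),N_\infty\rangle=0$, and that $N_\infty$ itself is tangent to $\Pi$. Let $R=\mathrm{diag}(-1,1,1)\in O(2,1)$ be the Lorentzian reflection across $\Pi$. I would show that the singularities of $G$ lie on $\Pi$ if and only if $R(G)=G$, and that this is in turn equivalent to $G$ being one of the surfaces $G_\tau$ of Definition \ref{def:wei}.

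For the implication ``$G_\tau\Rightarrow$ coplanar'' I would use the antiholomorphic involution $\sigma(z,w)=(\ovr{z},\ovr{w})$ of $\ovr{\Nb}_0$, which fixes $p_\infty$ and hence preserves $\ovr{\Omega}_0$. Since the $a_j$ and $b_j$ are real, a direct check gives $g_\tau\circ\sigma=\ovr{g_\tau}$ and $\sigma^*\phi_3^\tau=\ovr{\phi_3^\tau}$; inserting this into the representation \eqref{eq:repr} yields $X\circ\sigma=R\circ X$ up to a translation, so $R$ is a symmetry of $G_\tau$ across a plane $\{x_1=c\}$. Because $\sigma$ maps each fixed curve $\gamma_j$ of $J_0$ to itself and $X(\gamma_j)=q_j$, every singular point satisfies $R(q_j)=q_j$, and therefore all of them lie on the timelike plane $\{x_1=c\}$, whose normal is orthogonal to $N_\infty$.

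The converse is the substantial part. Assuming all $q_j$ lie on $\Pi$, I would first establish the rigidity statement $R(G)=G$. The surface $R(G)$ is again an entire maximal graph with horizontal end; since $R$ fixes $N_\infty$ and the vertical axis it preserves the asymptotic (half\nobreakdash-)catenoid, hence the logarithmic growth, and since each $q_j\in\Pi$ is fixed by $R$ it has exactly the same marked singular set. By the global coordinates on $\mathcal{G}_n$ given by the singular points together with the logarithmic growth (Remark \ref{re:comp} and \cite{conelike}), a marked graph is uniquely determined by these data, whence $R(G)=G$. Consequently $R$ induces an anticonformal involution $\sigma$ of $\ovr{\Omega}_0$, extending to the double surface $\ovr{\Nb}$, preserving $p_\infty$ and commuting with the mirror involution $J$, and acting on the Weierstrass data by $g\circ\sigma=\ovr{g}$ and $\sigma^*\phi_3=\ovr{\phi_3}$.

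Finally I would extract the conformal model. The composition $F:=\sigma\circ J$ is a holomorphic involution with $g\circ F=1/g$; it interchanges the two planar halves $\Omega_0$ and $\Omega_0^{\ast}$ of the double, so it has no fixed points in the interior of either half, the quotient $\ovr{\Nb}/F$ is a sphere, and $\ovr{\Nb}$ is hyperelliptic with the $2n+2$ fixed points of $F$ as branch points. The fixed arcs of $J$ (the curves $\gamma_j$) and those of $\sigma$ are complementary arcs sharing these branch points as common endpoints, which are therefore fixed by both involutions; thus $\sigma$ descends to $z\mapsto\ovr{z}$ on $\ovr{\Nb}/F=\ovr{\c}$ and the branch points are real, identifying the conformal structure of $G$ with the domain $\Omega_0$ of Definition \ref{def:omega} for suitable reals $a_1<\cdots<a_{2n+2}$. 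Once the conformal type is $\Omega_0$, Propositions \ref{pro:caract} and \ref{pro:b} (equivalently the classification of Theorem \ref{th:main}) force $G$ to be one of the $G_\tau$, completing the proof. The two genuine obstacles are both in the converse: justifying $R(G)=G$ (the invariance of the logarithmic growth under $R$ together with the global injectivity of the $\mathcal{G}_n$\nobreakdash-coordinates), and reconstructing the standard hyperelliptic model with real branch points from the commuting pair $(J,\sigma)$.
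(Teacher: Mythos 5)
Your proposal is correct and follows essentially the same route as the paper: in both, the forward direction is the direct check that $\sigma(z,w)=(\overline{z},\overline{w})$ induces the isometry $(x_1,x_2,x_3)\mapsto(-x_1,x_2,x_3)$, and the converse hinges on the reflection symmetry across $\{x_1=0\}$, the commuting involutions $\sigma$ (the paper's $T$) and $F=J\circ\sigma$ with $2n+2$ fixed points forcing the hyperelliptic model $w^2=\prod_{j=1}^{2n+2}(z-a_j)$ with $a_j\in\R$, followed by Propositions \ref{pro:caract} and \ref{pro:b}. The only localized differences are that the paper obtains the symmetry $R(G)=G$ by citing Klyachin's uniqueness theorem \cite{klyachin} rather than your (equivalent) appeal to the injectivity of the singular-points-plus-logarithmic-growth coordinates on $\mathcal{G}_n$ from \cite{conelike}, and it proves the branch points are real via a divisor computation giving $\overline{z\circ J}=z$ rather than by descending $\sigma$ to the quotient sphere $\overline{\Nb}/F$.
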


\begin{proof}
Assume that $G$ has all its singularities in an orthogonal plane to the normal vector at the end.
Up to a rigid motion in $\L^3$ we can assume that the end is horizontal and the singularities lie in the plane $\{x_1=0\}$.
Let $X:\Omega\to G\subset\L^3$ a conformal reparameterization of $G$.
By the uniqueness result in \cite{klyachin} (see also \cite{conelike} Remark 2.5), the surface is symmetric with respect to the plane
$\{x_1=0\}.$ This symmetry induces an antiholomorphic involution
$T:\overline{\Omega}\to \overline{\Omega}$ leaving $\partial\Omega$
globally fixed. It follows that $T$ extends to an antiholomorphic
involution $T:\overline\Nb\to \overline\Nb$, where $\Nb$ is the mirror surface, by putting $T\circ
J=J\circ T$ ($J$ is the mirror involution). Moreover, if $(g,\phi_3)$ are the Weierstrass data of the immersion, $g\circ T=\overline{g}$ and
$T^*(\phi_3)=\ovr{\phi_3}.$
It is straightforward that $T$ must have exactly two fixed points
on every connected component of the circular domain $\partial \overline{\Omega}.$ We call
these points $p_1,\ldots,p_{2n+2}.$ Observe that the end $p_{\infty}\in\overline\Omega$ is
also fixed by $T.$

Consider the holomorphic involution $F=J\circ T,$ whose fixed points are exactly $p_1,\ldots, p_{2n+2}.$ Therefore, $\ovr\Nb$ is a compact genus $n$ Riemann surface with $2n+2$ fixed points, this means that $\ovr\Nb$ is hyperelliptic with Weiersrtass points $p_1,\ldots,p_{2n+2}$ (see \cite{farkas}),
$$\overline\Nb  \equiv
\{(z,w)\in\overline{\c}^2\;:\;w^2=\prod_{i=1}^{2n+2}(z-a_j)\},$$
where $(a_j,0)$ corresponds to $p_j$ for any $j$ (and so $a_j\neq
a_k$ for $k\neq j$). With this identification we  have
$F(z,w)=(z,-w).$ Up to a Möbius transformation we can suppose that
$z(p_{2n+1})=1,$ $z(p_{2n+2})=-1,$ and $z(p_\infty)=\infty.$

In what follows we will identify $a_j=(a_j,0)\in \overline\nb.$ To
prove $a_j\in\r$ notice that the divisor associated to the
meromorphic 1-form $d(\overline{z\circ J})$ coincides with the one
of $dz$ and therefore $\overline{z\circ J}=k\,z+\lambda,$ for some
$k,\lambda\in\r.$ Since $a_{2n+1}=1$ and $a_{2n+2}=-1$ are fixed
by $J$ it follows that $\overline{z\circ J}=z$ which implies that
$a_j\in\r.$
Moreover, since $w^2\circ J=\overline{w}^2,$ then $w\circ J=\pm
\overline{w}.$ Taking into account that $J$ interchanges the two
points with $z=\infty,$ namely $p_\infty$ and
$p_\infty^*=J(p_\infty),$ then $w\circ J=- \overline{w}.$
Therefore $J(z,w)=(\ovr{z},-\ovr{w})$ and
$T(z,w)=(\ovr{z},\ovr{w}).$ In particular, $\Omega$ agrees with the circular domain $\Omega_0$ defined in Definition \ref{def:omega} and by Propositions
\ref{pro:caract} and \ref{pro:b} we are done. \\

Conversely, let $G_\tau$ one of the graphs defined in Definition \ref{def:wei}.
Consider the involution $T(z,w) = (\bar{z},\bar{w})$ on $\ovr\nb_0$ that fix globally any component of $\partial\Omega_0$.
Moreover, $g_\tau\circ T=\bar{g}_\tau$ and $T^\ast(\phi_3^\tau)=\bar{\phi}_3^\tau$, thus, $T$ induces an isometry
on the resulting surface, namely $I(x_1, x_2, x_3) = (-x_1, x_2, x_3).$ Since $\{a_1,\ldots,a_{2n+2}\}$ are fixed by $T$ it
follows that all the singularities lie in the plane $\{x_1 = 0\}$.
\end{proof}


\end{document}